\documentclass[letterpaper, 10pt, conference]{ieeeconf}
\IEEEoverridecommandlockouts \overrideIEEEmargins
\usepackage{amsmath,amssymb,url}
\usepackage{graphicx,subfigure,hyperref}
\usepackage{color}

\newcommand{\norm}[1]{\ensuremath{\left\| #1 \right\|}}

\newcommand{\bracket}[1]{\ensuremath{\left[ #1 \right]}}
\newcommand{\braces}[1]{\ensuremath{\left\{ #1 \right\}}}
\newcommand{\parenth}[1]{\ensuremath{\left( #1 \right)}}

\newcommand{\refeqn}[1]{(\ref{eqn:#1})}
\newcommand{\reffig}[1]{Fig. \ref{fig:#1}}

\newcommand{\trs}[1]{\mbox{tr}\ensuremath{\!\bracket{#1}}}

\newcommand{\SO}{\ensuremath{\mathsf{SO(3)}}}
\newcommand{\T}{\ensuremath{\mathsf{T}}}

\newcommand{\so}{\ensuremath{\mathfrak{so}(3)}}

\renewcommand{\Re}{\ensuremath{\mathbb{R}}}
\newcommand{\Sph}{\ensuremath{\mathsf{S}}}

\title{\LARGE \bf
Geometric Adaptive Control for Aerial Transportation of a Rigid Body}

\author{Taeyoung Lee
\thanks{Taeyoung Lee, Mechanical and Aerospace Engineering, George Washington University, Washington, DC 20052 {\tt tylee@gwu.edu}}%
\thanks{This research has been supported in part by NSF under the grants CMMI-1243000 (transferred from 1029551), CMMI-1335008, and CNS-1337722.}
}

\newtheorem{prop}{Proposition}

\begin{document}
\allowdisplaybreaks

\maketitle \thispagestyle{empty} \pagestyle{empty}

\begin{abstract}
This paper is focused on tracking control for a rigid body payload, that is connected to an arbitrary number of quadrotor unmanned aerial vehicles via rigid links. A geometric adaptive controller is constructed such that the payload asymptotically follows a given desired trajectory for its position and attitude in the presence of uncertainties. The coupled dynamics between the rigid body payload, links, and quadrotors are explicitly incorporated into control system design and stability analysis. These are developed directly on the nonlinear configuration manifold in a coordinate-free fashion to avoid singularities and complexities that are associated with local parameterizations. 
\end{abstract}

\section{Introduction}




By utilizing the high thrust-to-weight ratio, quadrotor unmanned aerial vehicles have been envisaged for aerial load transportation~\cite{PalCruIRAM12,MicFinAR11,MazKonJIRS10}. Most of the existing results for the control of quadrotors to transport a cable-suspended payload are based on the assumption that the dynamics of the payload is decoupled from the dynamics of quadrotors. For example, the effects of the payload are considered as arbitrary external force and torque exerted to quadrotors~\cite{MicFinAR11}. As such, these results may not be suitable for agile load transportation where the motion of cable and payload should be actively suppressed.

Recently, the full dynamic model for an arbitrary number of quadrotors transporting a payload are developed, and based on that, geometric tracking controllers are constructed in an intrinsic fashion. In particular, autonomous transportation of a point mass connected to quadrotors via rigid links is developed in~\cite{LeeSrePICDC13}. It has been generalized into a more realistic dynamic model that considers the deformation of cables in~\cite{GooLeePACC14}, and also the attitude dynamics of a payload, that is considered as a rigid body instead of a point mass, is incorporated in~\cite{LeePICDC14}. However, these results are based on the assumption that the exact properties of the quadrotors and the payload are available, and that there are no external disturbances, thereby making it challenging to implement those results in  actual hardware systems. 

The objective of this paper is to construct a control system for an arbitrary number of quadrotors connected to a rigid body payload via rigid links with explicit consideration on uncertainties. A coordinate-free form of the equations of motion that have been developed in~\cite{LeePICDC14} is extended to include the effects of unknown, but fixed forces and moments acting on each of the quadrotors, the cables, and the payload. A geometric nonlinear adaptive control system is designed such that both the position and the attitude of the payload asymptotically follow their desired trajectories, while maintaining a certain formation of quadrotors relative to the payload. 

The unique property is that the coupled dynamics of the payload, the cables, and quadrotors are explicitly incorporated in control system design for agile load transportations where the motion of the payload relative to the quadrotors are excited nontrivially. Another distinct feature is that the equations of motion and the control systems are developed directly on the nonlinear configuration manifold intrinsically. Therefore, singularities of local parameterization are completely avoided.

As such, the proposed control system is particularly useful for rapid and safe payload transportation in complex terrain, where the position and attitude of the payload should be controlled concurrently. Most of the existing control systems of aerial load transportation suffer from limited agility as they are based on reactive assumptions that ignore the inherent complexities in the dynamics of aerial load transportation. The proposed control system explicitly integrates the comprehensive dynamic characteristics to achieve extreme maneuverability in aerial load transportation. To the author's best knowledge, nonlinear adaptive tracking controls of a cable-suspended rigid body with uncertainties have not been studied as mathematically rigorously as presented in this paper.

\section{Problem Formulation}\label{sec:DM}

Consider $n$ quadrotor UAVs that are connected to a payload, that is modeled as a rigid body, via massless links (see Figure \ref{fig:DM}). Throughout this paper, the variables related to the payload is denoted by the subscript $0$, and the variables for the $i$-th quadrotor are denoted by the subscript $i$, which is assumed to be an element of $\mathcal{I}=\{1,\cdots\, n\}$ if not specified. We choose an inertial reference frame $\{\vec e_1,\vec e_2,\vec e_3\}$ and body-fixed frames $\{\vec b_{j_1},\vec b_{j_2},\vec b_{j_3}\}$ for $0\leq j\leq n$ as follows. For the inertial frame, the third axis $\vec e_3$ points downward along the gravity and the other axes are chosen to form an orthonormal frame. 

\begin{figure}
\centerline{\setlength{\unitlength}{0.1\columnwidth}\footnotesize
\begin{picture}(10,6.5)(0.0,0)
\put(1.25,0){\includegraphics[width=0.75\columnwidth]{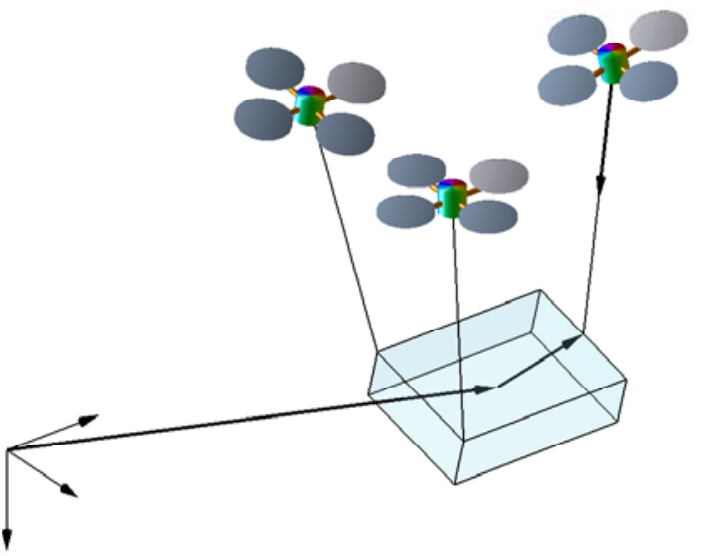}}
\put(2.35,1.45){$\vec e_1$}
\put(1.9,0.3){$\vec e_2$}
\put(1.4,-0.1){$\vec e_3$}
\put(3.1,1.0){$x_0\in\Re^3$}
\put(6.7,0.5){\shortstack[c]{$m_0,J_0$\\$R_0\in\SO$}}
\put(5.3,5.3){\shortstack[c]{$m_i,J_i$\\$R_i\in\SO$}}
\put(7.8,4.2){$q_i\in\Sph^2$}
\put(6.8,1.8){$\rho_i$}
\put(7.65,3.3){$l_i$}
\end{picture}}
\caption{Dynamics model: $n$ quadrotors are connect to a rigid body $m_0$ via massless links $l_i$. The configuration manifold is $\Re^3\times\SO\times(\Sph^2\times\SO)^n$.}\label{fig:DM}
\vspace*{-0.1cm}
\end{figure}

The location of the mass center of the payload is denoted by $x_0\in\Re^3$, and its attitude is given by $R_0\in\SO$, where the special orthogonal group is defined by $\SO=\{R\in\Re^{3\times 3}\,|\, R^TR=I,\,\mathrm{det}[R]=1\}$. Let $\rho_i\in\Re^3$ be the point on the payload where the $i$-th link is attached, and it is represented with respect to the zeroth body-fixed frame. The other end of the link is attached to the mass center of the $i$-th quadrotor. The direction of the link from the mass center of the $i$-th quadrotor toward the payload is defined by the unit-vector $q_i\in\Sph^2$, where $\Sph^2=\{q\in\Re^3\,|\,\|q\|=1\}$, and the length of the $i$-th link is denoted by $l_i\in\Re$.

Let $x_i\in\Re^3$ be the location of the mass center of the $i$-th quadrotor with respect to the inertial frame. As the link is assumed to be rigid, we have $x_i=x_0+R_0\rho_i-l_i q_i$. The attitude of the $i$-th quadrotor is defined by $R_i\in\SO$, which represents the linear transformation of the representation of a vector from the $i$-th body-fixed frame to the inertial frame. 

In summary, the configuration of the presented system is described by the position $x_0$ and the attitude $R_0$ of the payload, the direction $q_i$ of the links, and the attitudes $R_i$ of the quadrotors. The corresponding configuration manifold of this system is $\mathsf{Q}=\Re^3\times\SO\times(\Sph^2\times \SO)^n$.

The mass and the inertia matrix of the payload are denoted by $m_0\in\Re$ and $J_0\in\Re^{3\times 3}$, respectively. The dynamic model of each quadrotor is identical to~\cite{LeeLeoPICDC10}. The mass and the inertia matrix of the $i$-th quadrotor are denoted by $m_i\in\Re$ and $J_i\in\Re^{3\times 3}$, respectively. The $i$-th quadrotor can generates a thrust $-f_iR_ie_3\in\Re^3$ with respect to the inertial frame, where $f_i\in\Re$ is the total thrust magnitude and $e_3=[0,0,1]^T\in\Re^3$. It also generates a moment $M_i\in\Re^3$ with respect to its body-fixed frame. The control input of this system corresponds to $\{f_i,M_i\}_{1\leq i\leq n}$.

In this paper, the external disturbances are modeled as follows. The disturbance force  and moment  acting on the payload, namely $\Delta_{x_0},\Delta_{R_0}\in\Re^3$ are expressed as
\begin{align}
\Delta_{x_0}=\Phi_{x_0}(t,\mathsf{q},\dot{\mathsf{q}})\theta_{x_0},\quad
\Delta_{R_0}=\Phi_{R_0}(t,\mathsf{q},\dot{\mathsf{q}})\theta_{R_0},
\end{align}
where $\Phi_{x_0},\Phi_{R_0}:\Re\times\mathsf{TQ}\rightarrow \Re^{3\times n_\theta}$ denote matrix-valued, known function of the time $t$ and the tangent vector $(\mathsf{q},\dot{\mathsf{q}})\in\T_{\mathsf{q}}\mathsf{Q}$ of the configuration manifold, i.e., $q=(x_0,R_0,q_1,\ldots,q_n,R_1,\ldots R_n)$, and $\theta_{x_0},\theta_{R_0}\in\Re^{n_\theta\times 1}$ are fixed, unknown parameters for some $n_\theta$. This type of uncertainties are popular in the literature of adaptive controls, and they may represent various modeling errors or disturbances, such as the uncertainties in the mass and the inertia matrix of the payload. Similarly, the disturbance force and moment acting on the $i$-th quadrotors are given by
\begin{align}
\Delta_{x_i}=\Phi_{x_i}(t,\mathsf{q},\dot{\mathsf{q}})\theta_{x_i},\quad
\Delta_{R_i}=\Phi_{R_i}(t,\mathsf{q},\dot{\mathsf{q}})\theta_{R_i},
\end{align}
where $\Phi_{x_i},\Phi_{R_i}:\Re\times\mathsf{TQ}\rightarrow \Re^{3\times n_\theta}$ and $\theta_{x_i},\theta_{R_i}\in\Re^{n_\theta\times 1}$. Here, the disturbance forces are represented with respect to the inertial frame, and the disturbance moments are represented with respect to the corresponding body-fixed frame. 

Throughout this paper, the 2-norm of a matrix $A$ is denoted by $\|A\|$, and its maximum eigenvalue and minimum eigenvalues are denoted by $\lambda_{M}[A]$ and $\lambda_{m}[A]$, respectively. The standard dot product is denoted by $x \cdot y = x^Ty$ for any $x,y\in\Re^3$.

\subsection{Equations of Motion}

The kinematic equations for the payload, quadrotors, and links are given by
\begin{gather}
\dot q_{i} = \omega_i\times q_i=\hat\omega_i q_i,\label{eqn:dotqi}\\
\dot R_0  = R_0\hat\Omega_0,\quad \dot R_i  = R_i\hat\Omega_i,\label{eqn:dotRi}
\end{gather}
where $\omega_i\in\Re^3$ is the angular velocity of the $i$-th link, satisfying $q_i\cdot\omega_i=0$, and $\Omega_0$ and $\Omega_i\in\Re^3$ are the angular velocities of the payload and the $i$-th quadrotor expressed with respect to its body-fixed frame, respectively. The \textit{hat map} $\hat\cdot:\Re^3\rightarrow\so$ is defined by the condition that $\hat x y=x\times y$ for all $x,y\in\Re^3$, and the inverse of the hat map is denoted by the \textit{vee map} $\vee:\so\rightarrow\Re^3$, where $\so$ denotes the set of $3\times 3$ skew-symmetric matrices, i.e., $\so=\{S\in\Re^{3\times 3}\,|\, S^T=-S\}$, and it corresponds to the Lie algebra of $\SO$. 

We derive equations of motion according to Lagrangian mechanics. The velocity of the $i$-th quadrotor is given by $\dot x_i = \dot x_0+\dot R_0\rho_i - l_i\dot q_i$. The kinetic energy of the system is composed of the translational kinetic energy and the rotational kinetic energy of the payload and quadrotors:
\begin{align}
\mathcal{T} & = \frac{1}{2}m_0 \|\dot x_0\|^2 + \frac{1}{2}\Omega_0\cdot J_0\Omega_0\nonumber\\
&\quad +\sum_{i=1}^n \frac{1}{2} m_i \|\dot x_0+\dot R_0\rho_i - l_i\dot q_i\|^2+ \frac{1}{2}\Omega_i\cdot J_i\Omega_i.\label{eqn:TT}
\end{align}
The gravitational potential energy is given by
\begin{align}
\mathcal{U} = - m_0g e_3 \cdot x_0 - \sum_{i=1}^n m_ige_3\cdot (x_0+R_0\rho_i-l_iq_i),\label{eqn:UU}
\end{align}
where the unit-vector $e_3$ points downward along the gravitational acceleration as shown at \reffig{DM}. The resulting Lagrangian of the system is $\mathcal{L}=\mathcal{T}-\mathcal{U}$. 

The corresponding Euler-Lagrange equations have been developed according to Hamilton's principle in~\cite{LeePICDC14},  Here, it is generalized to include the effects of disturbances via the Lagrange-d'Alembert principle. Let the action integral be $\mathfrak{G}=\int_{t_0}^{t_f} \mathcal{L}\, dt$. Next, let the total control thrust at the $i$-th quadrotor with respect to the inertial frame be denoted by $u_i = -f_iR_ie_3\in\Re^3$ and the total control moment at the $i$-th quadrotor is defined as $M_i\in\Re^3$. There exist the disturbances $\Delta_{x_0},\Delta_{R_0}$ for the payload, and the disturbances $\Delta_{x_i},\Delta_{R_i}$ for the $i$-th quadrotor. The virtual work can be written as
\begin{align*}
\delta\mathcal{W} & = \int_{t_0}^{t_f} \sum_{i=1}^n (u_i+\Delta_{x_i})\cdot\braces{\delta x_0 + R_0\hat\eta_0\rho_i -l_i\xi_i\times q_i}\\
&\quad + \sum_{i=1}^n(M_i+\Delta_{R_i}) \cdot\eta_i+\Delta_{x_0}\cdot\delta x_0 + \Delta_{R_0}\cdot \eta_0\,dt.
\end{align*}
The Lagrange-d'Alembert principle states that $\delta\mathfrak{G}=-\delta\mathcal{W}$ for any variation of trajectories with fixed end points. This yields the following equations of motion (see~\cite{LeeACC151ext} for detailed derivations),
\begin{gather}
M_q (\ddot x_0-ge_3) - \sum_{i=1}^n m_iq_iq_i^T R_0\hat\rho_i\dot\Omega_0 = \Delta_{x_0}\nonumber\\
 +\sum_{i=1}^n u_i^\parallel+\Delta_{x_i}^\parallel-m_il_i\|\omega_i\|^2q_i- m_iq_iq_i^T R_0\hat\Omega_0^2\rho_i,
\label{eqn:ddotx0}\\
  (J_0-\sum_{i=1}^n m_i\hat\rho_i R_0^T q_iq_i^T R_0\hat\rho_i)\dot\Omega_0 \nonumber\\
 + \sum_{i=1}^n m_i\hat\rho_i R_0^Tq_iq_i^T(\ddot x_0-ge_3) + \hat\Omega_0   J_0\Omega_0
=\Delta_{R_0}\nonumber \\
+\sum_{i=1}^n \hat\rho_i R_0^T (u_i^\parallel+\Delta_{x_i}^\parallel-m_il_i\|\omega_i\|^2 q_i-m_iq_iq_i^TR_0\hat\Omega_0^2\rho_i),\label{eqn:dotW0}\\
\dot\omega_i =\frac{1}{l_i}\hat q_i(\ddot x_0-ge_3 - R_0\hat\rho_i\dot\Omega_0+
R_0\hat\Omega_0^2\rho_i)\nonumber\\
 -\frac{1}{m_il_i}\hat q_i(u_i^\perp+\Delta_{x_i}^\perp),\label{eqn:dotwi}\\
J_i\dot\Omega_i + \Omega_i\times J_i\Omega_i = M_i+\Delta_{R_i},\label{eqn:dotWi}
\end{gather}
where $M_q=m_y I + \sum_{i=1}^n m_i q_i q_i^T\in\Re^{3\times 3}$, which is symmetric, positive-definite for any $q_i$. 

Recall the vector $u_i\in\Re^3$ represents the control force at the $i$-th quadrotor, i.e., $u_i=-f_i R_i e_3$.  The vectors $u_i^\parallel$ and $u_i^\perp\in\Re^3$ denote the orthogonal projection of $u_i$ along $q_i$, and the orthogonal projection of $u_i$ to the plane  normal to $q_i$, respectively, i.e.,
\begin{align}
u_i^\parallel & = q_iq_i^T u_i,\\
u_i^\perp &= -\hat q_i^2 u_i =(I-q_iq_i^T) u_i.
\end{align}
Therefore, $u_i = u_i^\parallel + u_i^\perp$. Throughout this paper, the subscripts $\parallel$ and $\perp$ of a vector denote the component of the vector that is parallel to $q_i$ and the other component of the vector that is perpendicular to $q_i$. Similarly, the disturbance force at the $i$-th quadrotor is decomposed as
\begin{align}
\Delta_{x_i}^\parallel & = q_iq_i^T \Phi_{x_i}\theta_{x_i} \triangleq \Phi_{x_i}^\parallel \theta_{x_i},\label{eqn:delxiparallel}\\
\Delta_{x_i}^\perp & = (I-q_iq_i^T) \Phi_{x_i}\theta_{x_i} \triangleq \Phi_{x_i}^\perp \theta_{x_i}.\label{eqn:delxiperp}
\end{align}

\subsection{Tracking Problem}

Define a fixed matrix $\mathcal{P} \in \Re^{6\times 3n}$ as 
\begin{align}
\mathcal{P} = \begin{bmatrix} I_{3\times 3} & \cdots & I_{3\times 3} \\ \hat\rho_1 & \cdots & \hat\rho_n \end{bmatrix}. \label{eqn:P}
\end{align}
Recall that $\rho_i$ describe the point on the payload where the $i$-th link is attached. 
Assume the links are attached to the payload such that
\begin{align}
\mathrm{rank}[\mathcal{P}] \geq 6.\label{eqn:A1}
\end{align}
This is to guarantee that there exist enough degrees of freedom in control inputs for both the translational motion and the rotational maneuver of the payload. The assumption \refeqn{A1} requires that the number of quadrotor is at least three, i.e., $n\geq 3$.

It is also assumed that the bounds of the disturbance forces and moments are available, i.e., for known positive constant $B_\Phi,B_{\theta}\in\Re$, we have
\begin{align}
\max\{&\|\Phi_{x_{0}}\|,
\|\Phi_{R_{0}}\|,
\|[\Phi_{x_1}\|,\ldots,\|\Phi_{x_n}\|,\nonumber\\
&\|\Phi_{R_0}\|,\ldots,\|\Phi_{R_n}\|\} < B_{\Phi},\label{eqn:BPhi}\\
\max\{&\|\theta_{x_{0}}\|,
\|\theta_{R_{0}}\|,
\|[\theta_{x_1}\|,\ldots,\|\theta_{x_n}\|,\nonumber\\
&\|\theta_{R_0}\|,\ldots,\|\theta_{R_n}\|\} < B_{\theta}.\label{eqn:Btheta}
\end{align}

Suppose that the desired trajectories for the position and the attitude of the payload are given as smooth functions of time, namely $x_{0_d}(t)\in\Re^3$ and $R_{0_d}(t)\in\SO$. From the attitude kinematics equation, we have
\begin{align*}
\dot R_{0_d}(t) = R_{0_d}(t) \hat\Omega_{0_d}(t),
\end{align*}
where $\Omega_{0_d}(t)\in\Re^3$ corresponds to the desired angular velocity of the payload. It is assumed that the velocity and the acceleration of the desired trajectories are bounded by known constants.

We wish to design a control input of each quadrotor $\{f_i,M_i\}_{1\leq i\leq n}$ such that  the tracking errors asymptotically converge to zero along the solution of the controlled dynamics.

\section{Control System Design For Simplified Dynamic Model}\label{sec:SDM}

In this section, we consider a simplified dynamic model where the attitude dynamics of each quadrotor is ignored, and we design a control input by assuming that the thrust at each quadrotor, namely $u_i$ can be arbitrarily chosen. It corresponds to the case where every quadrotor is replaced by a fully actuated aerial vehicle that can generates a thrust along any direction arbitrarily. The effects of the attitude dynamics of quadrotors will be incorporated in the next section. 

In the simplified dynamic model given by \refeqn{ddotx0}-\refeqn{dotwi}, the dynamics of the payload are affected by the parallel components $u_i^\parallel$ of the thrusts, and the dynamics of the links are directly affected by the normal components $u_i^\perp$ of the thrusts. This structure motivates the following control system design procedure: first, the parallel components $u_i^\parallel$ are chosen such that the payload follows the desired position and attitude trajectory while yielding the desired direction of each link, namely $q_{i_d}\in\Sph^2$; next, the normal components $u_i^\perp$ are designed such that the actual direction of the links $q_i$ follows the desired direction $q_{i_d}$.

\subsection{Design of Parallel Components}

Let $a_i\in\Re^3$ be the acceleration of the point on the payload where the $i$-th link is attached, that is measured relative to the gravitational acceleration:
\begin{align}
a_i = \ddot x_0 - ge_3 + R_0\hat\Omega_0^2\rho_i -R_0\hat\rho_i\dot\Omega_0.\label{eqn:ai0}
\end{align}
The parallel component of the control input is chosen as
\begin{align}
u_i^\parallel & = \mu_i + m_il_i\|\omega_i\|^2q_i + m_i q_iq_i^T a_i,\label{eqn:uip}
\end{align}
where $\mu_i\in\Re^3$ is a virtual control input that is designed later, with a constraint that $\mu_i$ is parallel to $q_i$. Note that the expression of $u_i^\parallel$ is guaranteed to be parallel to $q_i$ due to the projection operator $q_iq_i^T$ at the last term of the right-hand side of the above expression.

The motivation for the proposed parallel components becomes clear if \refeqn{uip} is substituted into \refeqn{ddotx0}-\refeqn{dotW0} and rearranged to obtain
\begin{gather}
m_0 (\ddot x_0 -g e_3) = \Delta_{x_0}+\sum_{i=1}^n (\mu_i+\Delta_{x_i}^\parallel),\label{eqn:ddotx0s}\\
J_0\dot\Omega_0 +\hat\Omega_0 J_0\Omega_0 = \Delta_{R_0}+\sum_{i=1}^n \hat\rho_i R_0^T (\mu_i+\Delta_{x_i}^\parallel).\label{eqn:dotW0s}
\end{gather}
Therefore, considering a free-body diagram of the payload, the virtual control input $\mu_i$ corresponds to the force exerted to the payload by the $i$-link, or the tension of the $i$-th link in the absence of disturbances. 


Next, we determine the virtual control input $\mu_i$. As in~\cite{GooLeePECC13}, define position, attitude, and angular velocity tracking error vectors $e_{x_0},e_{R_0},e_{\Omega_0}\in\Re^3$ for the payload as
\begin{align*}
e_{x_0} & = x_0 -x_{0_d},\\
e_{R_0} & = \frac{1}{2} (R_{0_d}^T R_0-R_0^T R_{0_d})^\vee,\\
e_{\Omega_0} & = \Omega_0 - R_0^T R_{0_d}\Omega_{0_d}.
\end{align*}
The desired resultant control force $F_d\in\Re^3$ and moment $M_d\in\Re^3$ acting on the payload are given as
\begin{align}
F_d & = m_0 (-k_{x_0}e_{x_0} - k_{\dot x_0} \dot e_{x_0} + \ddot x_{0_d} - g e_3)\nonumber\\
&\quad -\Phi_{x_0}\bar\theta_{x_0} - \sum_{i=1}^n\Phi_{x_i}^\parallel\bar\theta_{x_i},\label{eqn:Fd}\\
M_d & = -k_{R_0} e_{R_0} - k_{\Omega_0} e_{\Omega_0}   +(R_0^TR_{0_d}\Omega_{0_d})^\wedge J_0 R_0^TR_{0_d} \Omega_{0_d}\nonumber\\
& \quad + J_0 R_0^T R_{0_d} \dot\Omega_{0_d}-\Phi_{R_0}\bar\theta_{R_0}-\sum_{i=1}^n\hat\rho_iR_0 \Phi_{x_i}^\parallel\bar\theta_{x_i},\label{eqn:Md}
\end{align}
for positive constants $k_{x_0},k_{\dot x_0},k_{R_0},k_{\Omega_0}\in\Re$. Here, the estimates of the unknown parameters $\theta_{x_0},\theta_{x_i},\theta_{R_0}$ are denoted by $\bar\theta_{x_0},\bar\theta_{x_i},\bar\theta_{R_0}\in\Re^{n_\theta}$. Adaptive control laws to update the estimates of disturbances are introduced later at Section \ref{sec:AL}.

These are the ideal resultant force and moment to achieve the control objectives. One may try to choose the virtual control input $\mu_i$ by making the expressions in the right-hand sides of \refeqn{ddotx0s} and \refeqn{dotW0s}, namely $\sum_{i}\mu_i$ and $\sum_i\hat\rho_iR_0^T\mu_i$, become identical to $F_d$ and $M_d$, respectively. But, this is not valid in general, as each $\mu_i$ is constrained to be parallel to $q_i$. Instead, we choose the desired value of $\mu_i$, without any constraint, such that 
\begin{align}
\sum_{i=1}^n \mu_{i_d} = F_d,\quad \sum_{i=1}^n \hat\rho_i R_0^T \mu_{i_d} = M_d,\label{eqn:muid0}
\end{align}
or equivalently, using the matrix $\mathcal{P}$ defined at \refeqn{P},
\begin{align*}
\mathcal{P}
\begin{bmatrix}
R_0^T\mu_{1_d} \\ \vdots \\ R_0^T\mu_{n_d}
\end{bmatrix}
=
\begin{bmatrix}
R_0^T F_d \\ M_d
\end{bmatrix}.
\end{align*}
From the assumption stated at \refeqn{A1}, there exists at least one solution to the above matrix equation for any $F_d, M_d$. Here, we find the minimum-norm solution given by
\begin{align}
\begin{bmatrix} \mu_{1_d}\\\vdots\\\mu_{n_d} \end{bmatrix} 
= \mathrm{diag}[R_0,\cdots R_0]\;\mathcal{P}^T (\mathcal{P}\mathcal{P}^T)^{-1}\begin{bmatrix} R_0^T F_d\\M_d\end{bmatrix}.\label{eqn:muid}
\end{align}
The virtual control input $\mu_i$ is selected as the projection of its desired value $\mu_{i_d}$ along $q_i$,
\begin{align}
\mu_i = (\mu_{i_d}\cdot q_i) q_i=q_iq_i^T\mu_{i_d},\label{eqn:mui}
\end{align}
and the desired direction of each link, namely $q_{i_d}\in\Sph^2$ is defined as
\begin{align}
q_{i_d} = -\frac{\mu_{i_d}}{\|\mu_{i_d}\|}.\label{eqn:qid}
\end{align}
It is straightforward to verify that when $q_{i}=q_{i_d}$, the resultant force and moment acting on the payload become identical to their desired values.


\subsection{Design of Normal Components}

Substituting \refeqn{ai0} into \refeqn{dotwi} and using \refeqn{delxiperp}, the equation of motion for the $i$-link is given by
\begin{align}
\dot\omega_i & = \frac{1}{l_i}\hat q_i a_i-\frac{1}{m_il_i}\hat q_i (u_i^\perp+\Delta_{x_i}^\perp).\label{eqn:widotf0}
\end{align}
Here, the normal component of the control input $u_i^\perp$ is chosen such that $q_i\rightarrow q_{i_d}$ as $t\rightarrow\infty$. Control systems for the unit-vectors on the two-sphere have been studied in~\cite{BulLew05,Wu12}. In this paper, we adopt the control system developed in terms of the angular velocity in~\cite{Wu12}, and we augment it with an adaptive control term to handle the disturbance $\Delta_{x_i}^\perp$.

For the given desired direction of each link, its desired angular velocity is obtained from the kinematics equation as
\begin{align*}
\omega_{i_d} = q_{i_d}\times \dot q_{i_d}.
\end{align*}
Define the direction and the angular velocity tracking error vectors for the $i$-th link, namely $e_{q_i},e_{\omega_i}\in\Re^3$ as 
\begin{align*}
e_{q_i} & = q_{i_d}\times q_i,\\
e_{\omega_i} & = \omega_i + \hat q_i^2\omega_{i_d}.
\end{align*}
For positive constants $k_{q},k_{\omega}\in\Re$, the normal component of the control input is chosen as
\begin{align}
u_i^\perp & = m_il_i\hat q_i \{-k_q e_{q_i} -k_{\omega}e_{\omega_i} -(q_i\cdot\omega_{i_d})\dot q_i -\hat q_i^2\dot\omega_d\}\nonumber\\
&\quad - m_i\hat q_i^2 a_i- \Phi_{x_i}^\perp\bar\theta_{x_i}.\label{eqn:uiperp}
\end{align}

Note that the expression of $u_i^\perp$ is perpendicular to $q_i$ by definition. Substituting \refeqn{uiperp} into \refeqn{widotf0}, and rearranging by the facts that the matrix $-\hat q_i^2$ corresponds to the orthogonal projection to the plane normal to $q_i$ and $\hat q_i^3=-\hat q_i$, we obtain
\begin{align}
\dot\omega_i & = -k_q e_{q_i} -k_{\omega}e_{\omega_i} -(q_i\cdot\omega_{i_d})\dot q_i -\hat q_i^2\dot\omega_d\nonumber\\
&\quad - \frac{1}{m_il_i}\hat q_i \Phi_{x_i}^\perp \tilde\theta_{x_i},\label{eqn:dotwif}
\end{align}
where the estimation error is defined as $\tilde\theta_{x_i}^\perp=\theta_{x_i}-\bar\theta_{x_i}\in\Re^{n_\theta}$.

In short, the control force for the simplified dynamic model is given by
\begin{align}
u_i = u_i^\parallel + u_i^\perp.\label{eqn:ui}
\end{align}

\subsection{Design of Adaptive Law}\label{sec:AL}

Next, we design the adaptive laws to construct the estimates of unknown parameters. The following projection operator is introduced such that the estimated parameters stay in the bound of the true parameters given by \refeqn{Btheta}.
\begin{align}
\mathrm{Pr}(\bar\theta,y) = \begin{cases}
y & \text{if  $\|\bar\theta\|<B_\theta$}\\
  &\hspace*{-0.15\columnwidth} \text{or $\|\bar\theta\|=B_\theta$ and $\bar\theta^Ty \leq 0$},\\
(I_{n_\theta\times n_\theta}-\frac{1}{\|\bar\theta\|^2}\bar\theta\bar\theta^T)y & \text{otherwise}.\\
\end{cases}\label{eqn:Pr}
\end{align}
Using this, the adaptive laws are defined as
\begin{align}
\dot{\bar \theta}_{x_0} = \mathrm{Pr}(\bar \theta_{x_0}, y_{x_0}),\label{eqn:hatDx0_dot}\\
\dot{\bar \theta}_{R_0} = \mathrm{Pr}(\bar \theta_{R_0}, y_{R_0}),\label{eqn:hatDR0_dot}\\
\dot{\bar \theta}_{x_i} = \mathrm{Pr}(\bar \theta_{x_i}, y_{x_i}),\label{eqn:hatDxi_dot}
\end{align}
where $y_{x_0},y_{R_0},y_{x_i}\in\Re^{n_\theta}$ are defined as
\begin{align}
y_{x_0} & = \frac{h_{x_0}}{m_0}\Phi_{x_0}^T(\dot e_{x_0}+c_x e_{x_0}),\label{eqn:y_x0}\\
y_{R_0} & = h_{R_0}\Phi_{R_0}^T(e_{\Omega_0}+ c_R e_{R_0}),\label{eqn:y_R0}\\
y_{x_i} & = h_{x_i}\Phi_{x_0}^T\big[q_iq_i^T\{\frac{1}{m_0}(\dot e_{x_0}+c_x e_{x_0})\nonumber\\
&\quad
-R_0\hat\rho_i(e_{\Omega_0}+c_R e_{R_0})\} +  \frac{1}{m_il_i}\hat q_i (e_{\omega_i} + c_q e_{q_i})\big ],\label{eqn:y_xi}
\end{align}
for positive constants $c_x,c_R,c_q\in\Re$ and adaptive gains $h_{x_0},h_{R_0},h_{x_i}\in\Re$. 

The first case of the projection map is the identity map, and the second case corresponds to the case that the estimated parameters are at the boundary of the region defined by \refeqn{Btheta} and the unprojected direction $y$ for the change of the estimates points outward. For such cases, $y$ is projected onto the plane tangent to the boundary such that the estimated parameters remain on the region~\cite{IoaSun95}.

The resulting stability properties are summarized as follows.

\begin{prop}\label{prop:SDM}
Consider the simplified dynamic model defined by \refeqn{ddotx0}-\refeqn{dotwi}. For given tracking commands $x_{0_d},R_{0_d}$, a control input is designed as \refeqn{ui}-\refeqn{hatDxi_dot}. Then, there exist the values of controller gains and controller parameters such that the following properties are satisfied.

\begin{itemize}
\item[(i)] The zero equilibrium of tracking errors $(e_{x_0},\dot e_{x_0},e_{R_0},e_{\Omega_0},e_{q_i},e_{\omega_i})$ and the estimation errors $(\bar\theta{x_0},\bar\theta_{R_0},\bar\theta_{x_i})$ is stable in the sense of Lyapunov.
\item[(ii)] The tracking errors asymptotically coverage to zero.
\item[(iii)] The magnitude of the estimated parameters is less than $B_\theta$ always, provided that the magnitude of their initial estimates is less than $B_\theta$. 
\end{itemize}
\end{prop}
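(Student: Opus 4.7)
The plan is to establish the proposition via a composite Lyapunov argument tailored to the nonlinear configuration manifold. I would introduce a Lyapunov function
\begin{align*}
V = V_{x_0} + V_{R_0} + \sum_{i=1}^n V_{q_i},
\end{align*}
where each summand combines the standard tracking energy on its manifold with a cross term that renders the quadratic form jointly positive-definite in error and error-rate, and a normalized quadratic in the corresponding estimation error. Concretely, $V_{x_0}$ contains $\tfrac{1}{2}m_0\|\dot e_{x_0}\|^2$, a spring-like term in $e_{x_0}$, a cross term $c_x m_0 e_{x_0}\cdot \dot e_{x_0}$, and $\tfrac{1}{2h_{x_0}}\|\tilde\theta_{x_0}\|^2$ with $\tilde\theta_{x_0}=\theta_{x_0}-\bar\theta_{x_0}$; $V_{R_0}$ uses the attitude error function $\Psi_0=\tfrac12 \tr{I-R_{0_d}^TR_0}$ together with $\tfrac{1}{2}e_{\Omega_0}\cdot J_0 e_{\Omega_0}$, a cross term $c_R e_{R_0}\cdot J_0 e_{\Omega_0}$, and $\tfrac{1}{2h_{R_0}}\|\tilde\theta_{R_0}\|^2$; and $V_{q_i}$ uses the $\Sph^2$ potential $k_q(1-q_i\cdot q_{i_d})$ with $\tfrac{1}{2}\|e_{\omega_i}\|^2$, a cross term $c_q e_{q_i}\cdot e_{\omega_i}$, and $\tfrac{1}{2h_{x_i}}\|\tilde\theta_{x_i}\|^2$. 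On a sublevel set where $\Psi_0$ is bounded away from $2$ and $q_i\cdot q_{i_d}$ is bounded away from $-1$, positive-definiteness of $V$ holds provided the cross-term coefficients $c_x,c_R,c_q$ are sufficiently small relative to the other gains.

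Next I would compute $\dot V$ along closed-loop trajectories in three steps. First, when $q_i=q_{i_d}$, equations \refeqn{ddotx0s}-\refeqn{dotW0s} with $\sum_i\mu_i=F_d$ and $\sum_i\hat\rho_iR_0^T\mu_i=M_d$ reduce $m_0\ddot e_{x_0}$ and $J_0\dot e_{\Omega_0}$ to $-k e - k_d\dot e$ terms plus the residuals $\Phi_{x_0}\tilde\theta_{x_0}+\sum_i\Phi_{x_i}^\parallel\tilde\theta_{x_i}$ and their attitude analogue, and \refeqn{dotwif} produces the residual $\tfrac{1}{m_il_i}\hat q_i\Phi_{x_i}^\perp\tilde\theta_{x_i}$ for the link loop. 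Second, the linear-in-$\tilde\theta$ contributions that these residuals introduce into $\dot V_{x_0}+\dot V_{R_0}+\sum_i\dot V_{q_i}$ are exactly matched by the adaptive terms $-h_\cdot^{-1}\tilde\theta_\cdot\cdot\dot{\bar\theta}_\cdot$: the unusual scalings in \refeqn{y_x0}-\refeqn{y_xi} are chosen precisely so that the three simultaneous couplings of $\tilde\theta_{x_i}$ into the payload translation, payload rotation, and link equations cancel algebraically. Third, in the projection branch I would invoke the standard property $\tilde\theta^T\mathrm{Pr}(\bar\theta,y)\geq \tilde\theta^T y$, valid since $\|\theta\|<B_\theta$ by \refeqn{Btheta} and the projected component only removes outward motion at the boundary; hence the cancellation remains an upper bound for $\dot V$, and (iii) follows as an immediate byproduct of \refeqn{Pr}.

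The remaining work is to handle the mismatch when $q_i\neq q_{i_d}$. The actual payload force $\sum_i\mu_i=\sum_i q_iq_i^T\mu_{i_d}$ differs from $F_d=\sum_i\mu_{i_d}$, and a similar mismatch appears in the torque; both can be written as cross-coupling terms of the form $(\text{payload error})\cdot(\text{link error})\cdot\|\mu_{i_d}\|$. Using $\|q_iq_i^T - q_{i_d}q_{i_d}^T\|$ and $1-q_i\cdot q_{i_d}$ being controlled by $\|e_{q_i}\|$, together with boundedness of $\mu_{i_d}$ which follows from bounded desired trajectories and the projection-enforced bound on $\bar\theta$, these cross terms can be dominated by the diagonal negative-definite blocks of $\dot V$ provided $k_q,k_\omega,k_{x_0},k_{\dot x_0},k_{R_0},k_{\Omega_0}$ are chosen large enough. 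The resulting estimate takes the form $\dot V\leq -W$ for a positive-definite quadratic $W$ in the tracking errors only, yielding (i) Lyapunov stability for the combined tracking/estimation state, and then Barbalat's lemma applied to $W$ (uniformly continuous because all signals are bounded, hence $\dot W$ is bounded) yields (ii).

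The main obstacle I anticipate is precisely this last domination step: the coupling constants depend on $\|\mu_{i_d}\|$, which in turn depends on $F_d,M_d$ and therefore on $\bar\theta$, so the small-gain-style tuning condition that picks the controller gains must be stated in terms of the worst-case bound on $\mu_{i_d}$ over the admissible ball $\|\bar\theta\|\leq B_\theta$. Once that uniform bound is fixed, the rest is a routine completion-of-squares argument.
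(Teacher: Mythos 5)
Your overall architecture coincides with the paper's: the same Lyapunov function (tracking energies with small cross terms $c_x e_{x_0}\cdot\dot e_{x_0}$, $c_R e_{R_0}\cdot J_0 e_{\Omega_0}$, $c_q e_{q_i}\cdot e_{\omega_i}$, plus $\tfrac{1}{2h}\|\tilde\theta\|^2$ terms), the same identification of the mismatch $\sum_i(q_iq_i^T-I)\mu_{i_d}$ as the only coupling between the payload loop and the link loop, the same projection inequality to make the adaptation terms nonpositive (which gives (iii) for free), and the same Barbalat ending. However, one step as you state it would fail: the claim that the cross-coupling terms can be ``dominated by the diagonal negative-definite blocks of $\dot V$ provided $k_q,k_\omega,k_{x_0},k_{\dot x_0},k_{R_0},k_{\Omega_0}$ are chosen large enough,'' resting on ``boundedness of $\mu_{i_d}$.'' The desired tension $\mu_{i_d}$ is not bounded by a constant depending only on the desired trajectory and $B_\theta$: by \refeqn{Fd}, \refeqn{Md}, and \refeqn{muid} it contains the feedback terms $-k_{x_0}e_{x_0}-k_{\dot x_0}\dot e_{x_0}$ and $-k_{R_0}e_{R_0}-k_{\Omega_0}e_{\Omega_0}$, so the coupling term $(c_xe_{x_0}+\dot e_{x_0})\cdot Y_x$ contains pieces such as $\beta\, c_xk_{x_0}\|e_{x_0}\|^2\|e_{q_i}\|$ and $\beta\, k_{\dot x_0}\|\dot e_{x_0}\|^2\|e_{q_i}\|$, i.e.\ it is cubic in the errors and scales with exactly the same gains as the good diagonal terms $-c_xk_{x_0}\|e_{x_0}\|^2$ and $-k_{\dot x_0}\|\dot e_{x_0}\|^2$. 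Increasing the gains therefore does not improve the ratio of bad to good terms at all.

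The mechanism the paper actually uses is a domain restriction rather than high gain: on the set $D$ of \refeqn{D} one has $\|e_{q_i}\|\leq\alpha_i=\sqrt{\psi_{q_i}(2-\psi_{q_i})}$, and $\psi_{q_i}$ is assumed small enough that $n\alpha_i\beta<1$ (and similarly for the attitude coupling with $\sigma_i$), so that the gain-proportional coupling is a strict fraction of the corresponding diagonal term; this produces diagonal blocks like $c_xk_{x_0}(1-n\alpha_i\beta)$ in the matrices $W_{x_i},W_{R_i}$ of \refeqn{Wi}, which are then made positive definite by taking $c_x,c_R,c_q$ sufficiently small. Only the residual pieces that are genuinely bounded by a constant $B$ (coming from $\ddot x_{0_d}$, $\Omega_{0_d}$, \refeqn{BPhi}, and the projection-enforced bound $\|\bar\theta\|\leq B_\theta$) are treated the way you propose. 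So your proof goes through once you replace ``choose the gains large'' by ``restrict the initial link-direction errors so that $n\alpha_i\beta<1$ and choose $c_x,c_R,c_q$ small''; the conclusion is then local (initial conditions in $D$), which is consistent with the way Proposition~\ref{prop:SDM} is stated.
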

\begin{proof}
Due to the page limit, the proof is relegated to~\cite{LeeACC151ext}.
\end{proof}



\section{Control System Design for Full Dynamic Model}\label{sec:FDM}

The control system designed at the previous section is based on a simplifying assumption that each quadrotor can generate a thrust along any arbitrary direction instantaneously. However, the dynamics of quadrotor is underactuated since the direction of the total thrust is always parallel to its third body-fixed axis, while the magnitude of the total thrust can be arbitrarily changed. This can be directly observed from the expression of the total thrust, $u_i = -f_i R_i e_3$, where $f_i$ is the total thrust magnitude, and $R_ie_3$ corresponds to the direction of the third body-fixed axis. Whereas, the rotational attitude dynamics is fully actuated by the control moment $M_i$.

Based on these observations, the attitude of each quadrotor is controlled such that the third body-fixed axis becomes parallel to the direction of the ideal control force $u_i$ designed in the previous section within a finite time. More explicitly, the desired attitude of each quadrotor is constructed as follows. The desired direction of the third body-fixed axis of the $i$-th quadrotor, namely $b_{3_i}\in\Sph^2$ is given by
\begin{align}
b_{3_i} = -\frac{u_i}{\|u_i\|}.\label{eqn:b3i}
\end{align}
This provides two-dimensional constraint on the three-dimensional desired attitude of each quadrotor, and there remains one degree of freedom. To resolve it, the desired direction of the first body-fixed axis $b_{1_i}(t)\in\Sph^2$ is introduced as a smooth function of time~\cite{LeeLeoPICDC10}. This corresponds to controlling the additional one dimensional yawing angle of each quadrotor. From these, the desired attitude of the $i$-th quadrotor is given by
\begin{align*}
R_{i_c} = \begin{bmatrix}-\frac{(\hat b_{3_i})^2 b_{1_i}}{\|(\hat b_{3_i})^2 b_{1_i}\|}, &
\frac{\hat b_{3_i}b_{1_i}}{\|\hat b_{3_i}b_{1_i}\|},& b_{3_i}\end{bmatrix},
\end{align*}
which is guaranteed to be an element of $\SO$. The desired angular velocity is obtained from the attitude kinematics equation, $\Omega_{i_c} = (R_{i_c}^T\dot R_{i_c})^\vee\in\Re^3$.  

In the prior work described in~\cite{LeePICDC14}, the attitude of each quadrotor is controlled such that the equilibrium $R_i=R_{i_c}$ becomes exponentially stable, and the stability of the combined full dynamic model is achieved via singular perturbation theory~\cite{Kha96}. However, we can not follow such approach in this paper, as the presented adaptive control system guarantees only the asymptotical convergence of the tracking error variables due to the disturbances, thereby making is challenging to apply the singular perturbation theory. Here, we design the attitude controller of each quadrotor such that $R_i$ becomes equal to $R_{i_c}$ within a finite time via finite-time stability theory~\cite{BhaBerSJCO00,YuYuA05,WuRadAA11}. 

Define the tracking error vectors $e_{R_i},e_{\Omega_i}\in\Re^3$ for the attitude and the angular velocity of the $i$-th quadrotor as
\begin{align*}
e_{R_i} = \frac{1}{2}(R_{i_c}^T R_i -R_i^T R_{i_c})^\vee,\quad
e_{\Omega_i} = \Omega_i - R_i^T R_{i_c}\Omega_{i_c}.
\end{align*}
The time-derivative of $e_{R_i}$ can be written as~\cite{LeeLeoPICDC10}
\begin{align}
\dot e_{R_i} & = \frac{1}{2}(\trs{R_i^T R_{i_c}}I-R_i^T R_{i_c})e_{\Omega_i}\triangleq E(R_i,R_{i_c})e_{\Omega_i}.\label{eqn:eRi_dot}
\end{align}
For $0< r <1$, define $S:\Re\times\Re^3\rightarrow\Re^3$ as
\begin{align*}
S(r,y) = \begin{bmatrix}|y_1|^r\mathrm{sgn}(y_1),&
|y_2|^r\mathrm{sgn}(y_2),&
|y_3|^r\mathrm{sgn}(y_3)\end{bmatrix}^T,
\end{align*}
where $y=[y_1,y_2,y_3]^T\in\Re^3$, and $\mathrm{sgn}(\cdot)$ denotes the sign function. For positive constants $k_{R},l_{R}$, the terminal sliding surface $s_i\in\Re^3$ is designed as
\begin{align}
s_i = e_{\Omega_i} + k_{R} e_{R_i} + l_{R}S(r,e_{R_i}).\label{eqn:si}
\end{align}
We can show that when confined to the surface of $s_i\equiv 0$, the tracking errors become zero in a finite time. To reach the sliding surface, for positive constants $k_s,l_s$, the control moment is designed as
\begin{align}
M_i & = -k_s s_i - l_s S(r,s_i)+\Omega_i\times J_i\Omega_i\nonumber\\
&\quad - (k_R J_i+ l_s r J_i\mathrm{diag}_j[|e_{R_{i_j}}|^{r-1}]) E(R_i,R_{c_i}) e_{\Omega_i}\nonumber\\
&\quad  -J_i(\hat\Omega_i R_i^T R_{i_c} \Omega_{i_c} - R_i^T R_{i_c}\dot\Omega_{i_c}).\label{eqn:Mi}
\end{align}

The thrust magnitude is chosen as the length of $u_i$, projected on to $-R_ie_3$,
\begin{align}
f_i & = - u_i\cdot R_i e_3,\label{eqn:fi}
\end{align}
which yields that the thrust of each quadrotor becomes equal to its desired value $u_i$ when $R_i=R_{i_c}$.

Stability of the corresponding controlled systems for the full dynamic model can be shown by using the fact that the full dynamic model becomes exactly same as the simplified dynamic model within a finite time.

\begin{prop}\label{prop:FDM}
Consider the full dynamic model defined by \refeqn{ddotx0}-\refeqn{dotWi}. For given tracking commands $x_{0_d},R_{0_d}$ and the desired direction of the first body-fixed axis $b_{1_i}$, control inputs for quadrotors are designed as \refeqn{Mi} and \refeqn{fi}. Then, there exists controller parameters such that the tracking error variables 
$(e_{x_0},\dot e_{x_0},e_{R_0},e_{\Omega_0},e_{q_i},e_{\omega_i})$ asymptotically converge to zero, and the estimation errors are uniformly bounded.
\end{prop}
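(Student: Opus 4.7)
The plan is to exploit the cascade structure explicitly identified in the discussion preceding the proposition: the sliding-mode attitude controller drives $R_i\to R_{i_c}$ and $\Omega_i\to \Omega_{i_c}$ in finite time, after which the full closed-loop system is \emph{identical} to the simplified dynamic model analyzed in Proposition \ref{prop:SDM}, and the conclusions there can be imported directly.

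First, I would establish finite-time convergence of the attitude of each quadrotor to its commanded attitude. Using \refeqn{eRi_dot} and differentiating $s_i$ in \refeqn{si} along the quadrotor attitude dynamics \refeqn{dotWi}, substitution of $M_i$ from \refeqn{Mi} cancels all of the feedforward and cross terms, leaving
\begin{align*}
J_i \dot s_i = -k_s s_i - l_s S(r,s_i) + \Delta_{R_i}.
\end{align*}
Taking $V_{s_i}=\tfrac{1}{2}s_i^T J_i s_i$ and using the bounds \refeqn{BPhi}, \refeqn{Btheta} on $\Delta_{R_i}=\Phi_{R_i}\theta_{R_i}$, one obtains an inequality of the form $\dot V_{s_i}\le -c_1 V_{s_i}-c_2 V_{s_i}^{(1+r)/2}$ whenever the gains $k_s,l_s$ are chosen large enough to dominate $B_\Phi B_\theta$; by the standard finite-time Lyapunov result of~\cite{BhaBerSJCO00}, $s_i(t)\equiv 0$ for all $t\ge T_{1,i}$ for some finite $T_{1,i}$ depending on the initial conditions. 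On the sliding surface $s_i=0$, the reduced dynamics $\dot e_{R_i}=E(R_i,R_{i_c})(-k_R e_{R_i}-l_R S(r,e_{R_i}))$ are again finite-time stable by the same theory, so there exists $T_i<\infty$ with $e_{R_i}(t)=0$ and $e_{\Omega_i}(t)=0$ for all $t\ge T_i$.

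Next, I would show that for $t\ge T^*:=\max_i T_i$ the full model coincides with the simplified one. When $R_i=R_{i_c}$, the third column of $R_{i_c}$ is exactly $b_{3_i}=-u_i/\|u_i\|$ by construction, so $R_i e_3=-u_i/\|u_i\|$, and therefore by \refeqn{fi} the actual control force becomes
\begin{align*}
-f_i R_i e_3 = (u_i\cdot R_i e_3)\,R_i e_3 = u_i,
\end{align*}
i.e., the applied thrust at quadrotor $i$ equals the ideal thrust designed in Section \ref{sec:SDM}. Consequently, for $t\ge T^*$ the evolution of $(x_0,R_0,q_i,\omega_i,\Omega_0)$ and of the adaptive estimates is governed exactly by the closed-loop simplified system covered by Proposition \ref{prop:SDM}. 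Asymptotic convergence of the payload and link tracking errors then follows from part (ii) of that proposition (applied with initial conditions set at $t=T^*$), while boundedness of the estimates $\bar\theta_{x_0},\bar\theta_{R_0},\bar\theta_{x_i},\bar\theta_{R_i}$ is built in by the projection operator \refeqn{Pr} and its invariance of the ball of radius $B_\theta$.

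The main obstacle is ensuring that the relevant signals stay well-defined and bounded on the transient interval $[0,T^*]$, before the sliding-mode phase terminates. During this phase the desired quadrotor attitude $R_{i_c}$ depends on $u_i$, which in turn depends on $\mu_{i_d}$, on the adaptive estimates, and on the states of the payload and the cables; one has to verify that $\|\mu_{i_d}\|$ stays bounded away from zero so that $q_{i_d}$ in \refeqn{qid} and hence $R_{i_c}$ are smooth, and that $b_{1_i}$ is never aligned with $b_{3_i}$. This is the analogue of the well-posedness condition in~\cite{LeeLeoPICDC10, LeePICDC14} and can be handled by restricting the initial tracking errors to a sufficiently small sublevel set of the Lyapunov function used in Proposition \ref{prop:SDM}, together with the bound $B_\theta$ enforced by the projection, so that $\mu_{i_d}$ remains in a neighborhood of its nominal value $-m_0 g e_3/n$ throughout the finite interval $[0,T^*]$. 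Once this region of attraction is fixed, the cascade argument above closes the proof. Full details are given in~\cite{LeeACC151ext}.
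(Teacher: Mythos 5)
Your overall architecture is the same as the paper's: a two-stage finite-time argument (first reach the surface $s_i=0$, then show $e_{R_i},e_{\Omega_i}$ vanish in finite time on that surface), followed by the observation that $-f_iR_ie_3=u_i$ once $R_i=R_{i_c}$, so that after $T^*$ the closed loop coincides with the simplified model and Proposition~\ref{prop:SDM} applies, with the transient on $[0,T^*]$ handled by choosing the attitude gains large enough that $T^*$ is small. Your well-posedness remark about keeping $\|\mu_{i_d}\|$ away from zero is a point the paper glosses over, and is welcome.

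There is, however, one genuine gap, in the reaching phase. From $J_i\dot s_i=-k_ss_i-l_sS(r,s_i)+\Delta_{R_i}$ you claim an inequality of the form $\dot V_{s_i}\le -c_1V_{s_i}-c_2V_{s_i}^{(1+r)/2}$ ``whenever $k_s,l_s$ are large enough to dominate $B_\Phi B_\theta$.'' This cannot be made to work: the disturbance contributes $s_i\cdot\Delta_{R_i}\le B_\Phi B_\theta\|s_i\|$, while the stabilizing terms contribute $-k_s\|s_i\|^2-l_s\sum_j|s_{i_j}|^{1+r}$, and since $r>0$ both $\|s_i\|^2$ and $\|s_i\|^{1+r}$ are $o(\|s_i\|)$ as $s_i\to 0$. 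Hence near the surface the disturbance term dominates for \emph{every} finite choice of $k_s,l_s$; you obtain at best convergence to a residual ball, not $s_i\equiv 0$ in finite time, and the exact reduction to the simplified model --- the entire point of the finite-time construction --- collapses. The paper closes this by including a discontinuous switching term $-B_\delta\,s_i/\|s_i\|$ in the moment command, with $B_\delta$ an a priori upper bound on $\|\Delta_{R_i}\|$ obtained from \refeqn{BPhi} and \refeqn{Btheta}; this term appears in the error dynamics used in the appendix even though it is omitted from the displayed formula \refeqn{Mi}. With it, $s_i\cdot\Delta_{R_i}-B_\delta\|s_i\|\le 0$ and the homogeneous finite-time estimate of~\cite{YuYuA05} goes through. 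You need this exact-cancellation mechanism (or an equivalent one); gain increase alone is insufficient.
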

\begin{proof}
See Appendix \ref{sec:prfFDM}.
\end{proof}

This implies that the payload asymptotically follows any arbitrary desired trajectory both in translations and rotations in the presence of uncertainties. In contrast to the existing results in aerial transportation of a cable suspended load, it does not rely on any simplifying assumption that ignores the coupling between payload, cable, and quadrotors. Also, the presented global formulation on the nonlinear configuration manifold avoids singularities and complexities that are inherently associated with local coordinates. As such, the presented control system is particularly useful for agile load transportation involving combined translational and rotational maneuvers of the payload in the presence of uncertainties.

\section{Numerical Example}\label{sec:NE}
We consider a numerical example where three quadrotors ($n=3$) transport a rectangular box along a figure-eight curve. More explicitly, the mass of the payload is $m_0=1.5\,\mathrm{kg}$, and its length, width, and height are $1.0\,\mathrm{m}$, $0.8\,\mathrm{m}$, and $0.2\,\mathrm{m}$, respectively. Mass properties of three quadrotors are identical, and they are given by
\begin{gather*}
m_i=0.755\,\mathrm{kg},\quad J_i=\mathrm{diag}[0.0820,\, 0.0845,\, 0.1377]\,\mathrm{kgm^2}.
\end{gather*}
The length of cable is $l_i=1\,\mathrm{m}$, and they are attached to the following points of the payload.
\begin{gather*}
\rho_1 = [0.5,\,0,\,-0.1]^T,\\
\rho_1 = [-0.5,\,0.4,\,-0.1]^T,\quad
\rho_3 = [-0.5,\,-0.4,\,-0.1]^T\,\mathrm{m}.
\end{gather*}
In other words, the first link is attached to the center of the top, front edge, and the remaining two links are attached to the vertices of the top, rear edge (see Figure \ref{fig:DM}). 

\begin{figure}
\centerline{
	\subfigure[3D perspective]{
		\includegraphics[width=0.95\columnwidth]{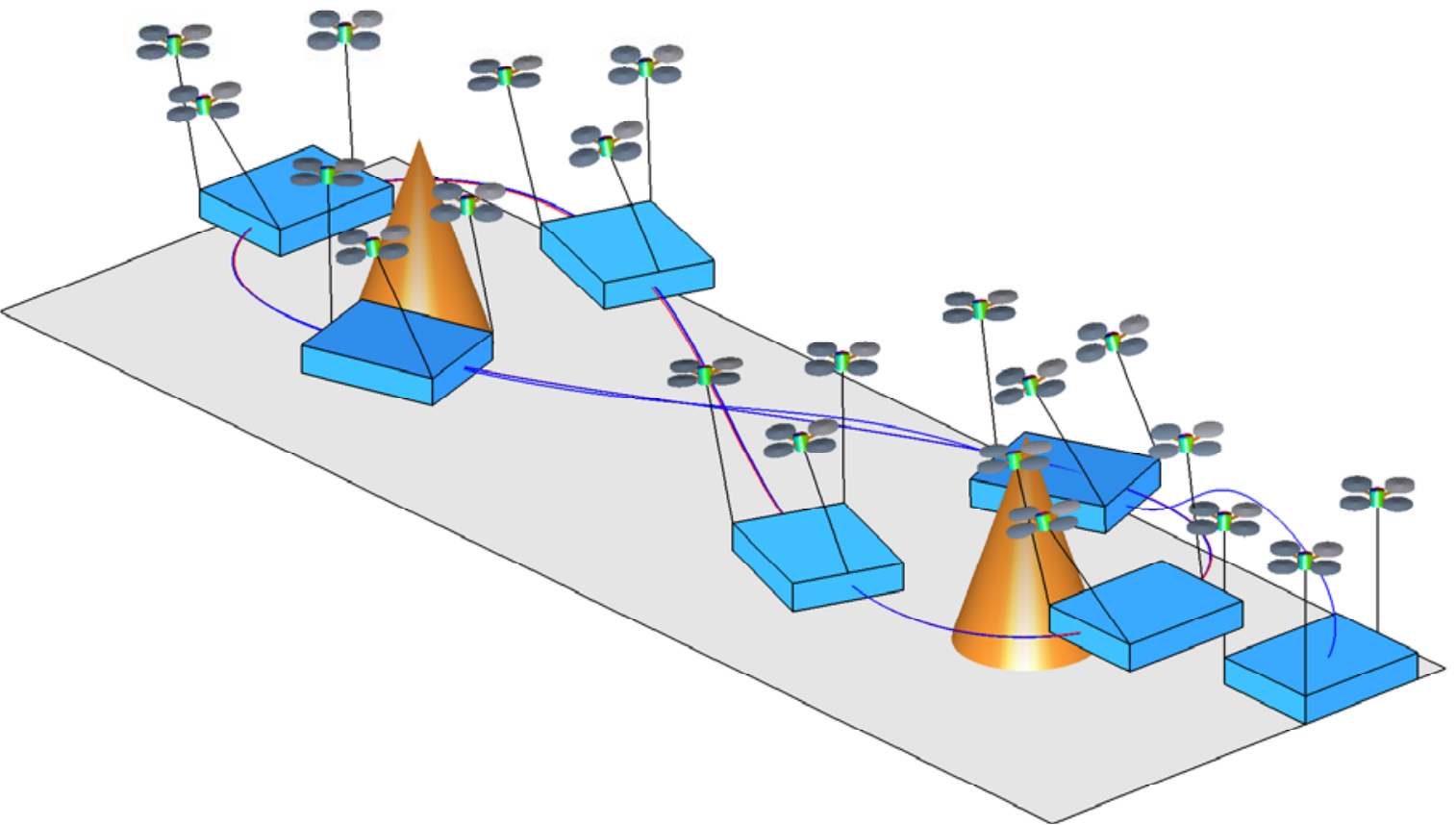}}
}
\centerline{
	\subfigure[Top view]{
	\setlength{\unitlength}{0.1\columnwidth}\scriptsize
	\begin{picture}(9.5,3.2)(0,0)
	\put(0,0){\includegraphics[width=0.95\columnwidth]{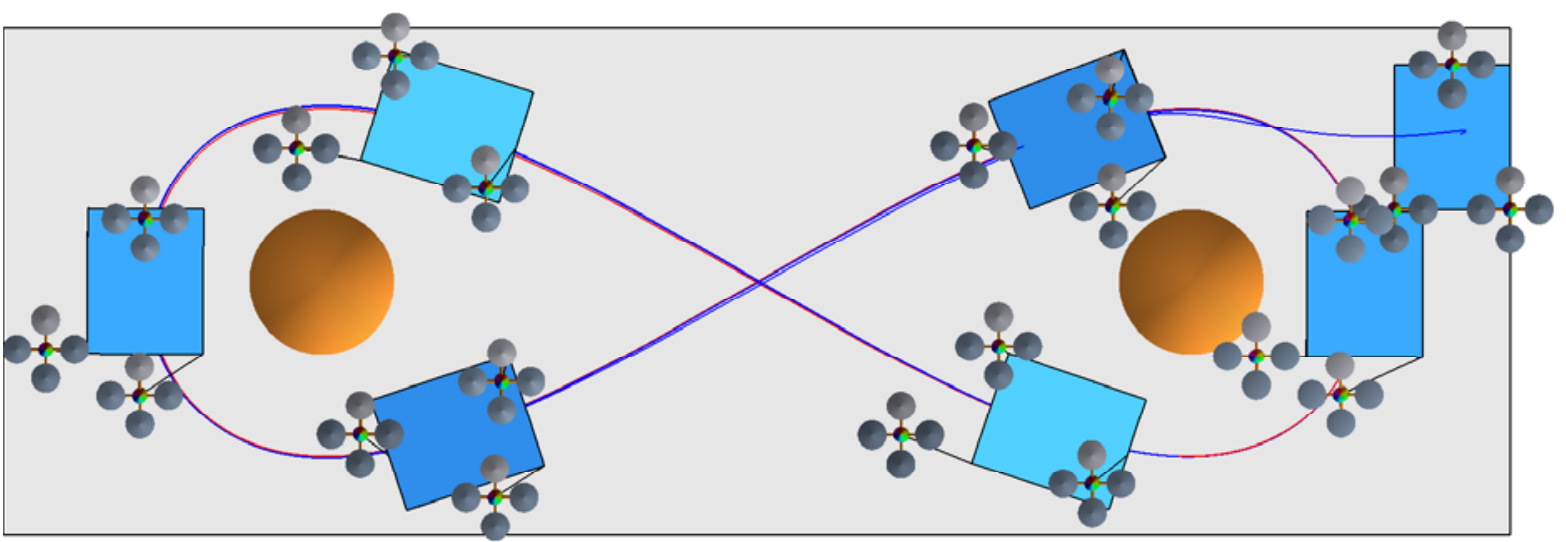}}
	\put(8.4,3.2){$t=0$}
	\put(6.0,3.2){$t=3.3$}
	\put(2.2,-0.2){$t=6.6$}
	\put(0.2,2.3){$t=10$}
	\put(2.2,3.2){$t=13.3$}
	\put(5.8,-0.2){$t=16.6$}
	\put(7.9,0.4){$t=20$}
	\end{picture}}	
}
\centerline{
	\subfigure[Side view]{
		\includegraphics[width=0.95\columnwidth]{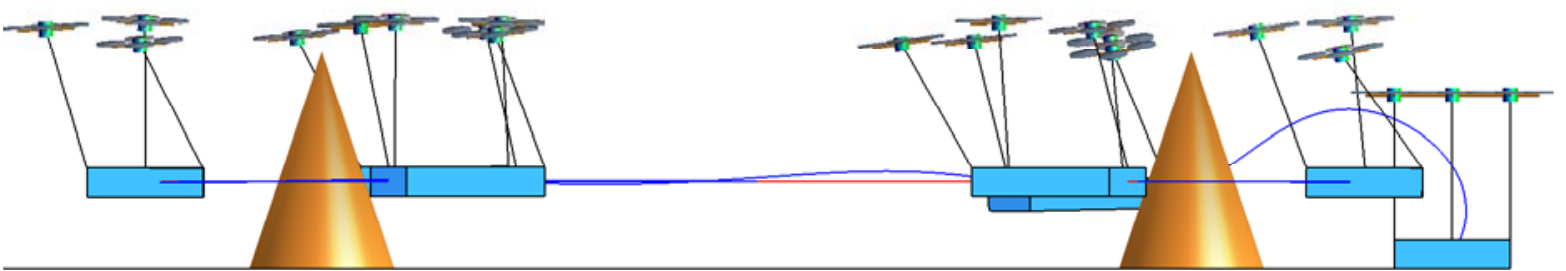}}
}
\caption{Snapshots of controlled maneuver (red:desired trajectory, blue:actual trajectory). A short animation illustrating this maneuver is available at {\href{http://youtu.be/nOWErfdzZLU}{http://youtu.be/nOWErfdzZLU}}.}\label{fig:SS}
\end{figure}

The desired trajectory of the payload is chosen as
\begin{align*}
x_{0_d}(t) = [1.2\sin (0.2\pi t),\, 4.2\cos (0.1\pi t),\, -0.5]^T\,\mathrm{m}.
\end{align*}
The desired attitude of the payload is chosen such that its first axis is tangent to the desired path, and the third axis is parallel to the direction of gravity, it is given by
\begin{align*}
R_{0_d}(t) = \begin{bmatrix}\frac{\dot x_{0_d}}{\|\dot x_{0_d}\|}&\frac{\hat e_3\dot x_{0_d}}{\|\hat e_3\dot x_{0_d}\|}& e_3\end{bmatrix}.
\end{align*}
Initial conditions are chosen as
\begin{gather*}
x_0(0)=[1,\,4.8,\,0]^T\,\mathrm{m},\quad v_0(0)=0_{3\times 1}\,\mathrm{m/s},\\
q_i(0)= e_3,\; \omega_i(0)=0_{3\times 1},\;
R_i(0)=I_{3\times 3},\; \Omega_i(0)=0_{3\times 1}.
\end{gather*}
The uncertainties are specified as
\begin{align*}
\Delta_{x_0}=[1,\, 3,\, -2.5]^T,\quad \Delta_{R_0}=[-0.5,\, 0.1,\, -1.5]^T,\\
\Delta_{x_i}=[0.5,\, -0.2,\, 0.3]^T,\quad \Delta_{R_i}=[0.2,\, 0.3,\, -0.7]^T.
\end{align*}

\begin{figure}
\centerline{
	\subfigure[Position of payload ($x_0$:blue, $x_{0_d}$:red)]{
		\includegraphics[width=0.48\columnwidth]{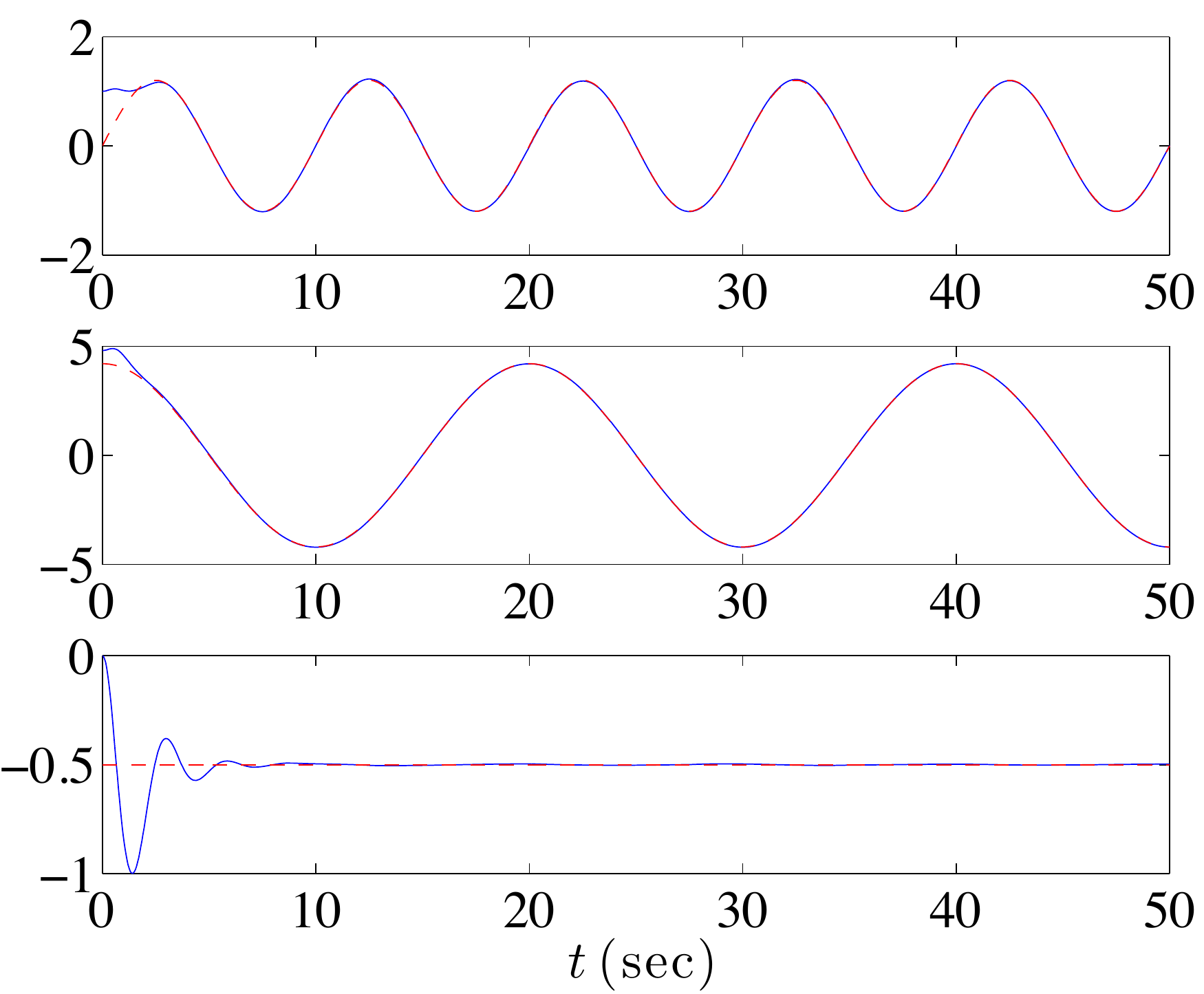}}
	\hfill
	\subfigure[Attitude tracking error of payload $\Psi_0=\frac{1}{2}\|R_0-R_{0_d}\|^2$]{
		\includegraphics[width=0.48\columnwidth]{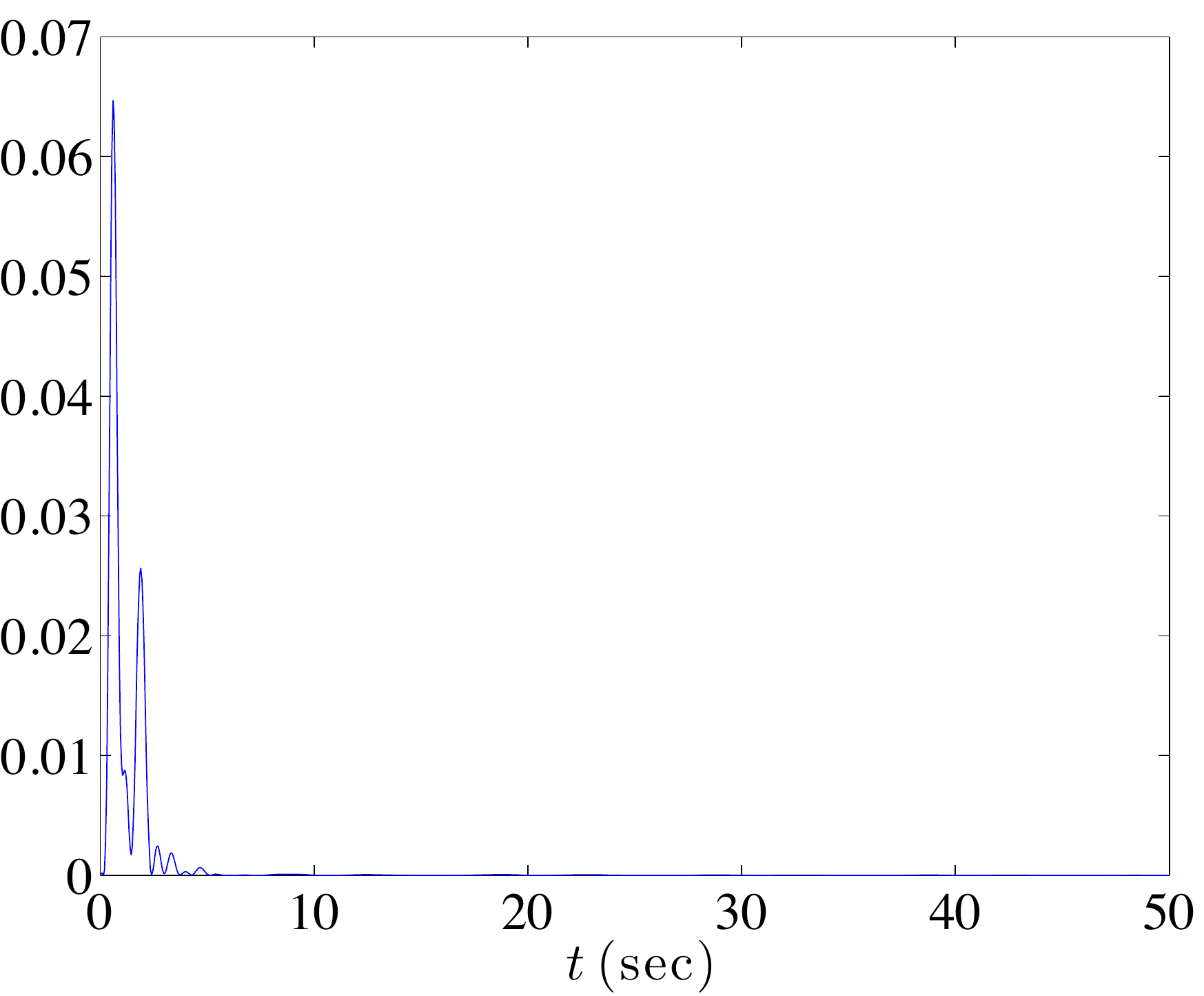}}
}
\centerline{
	\subfigure[Link direction error $\Psi_{q_i}=\frac{1}{2}\|q_i-q_{i_d}\|^2$]{
		\includegraphics[width=0.48\columnwidth]{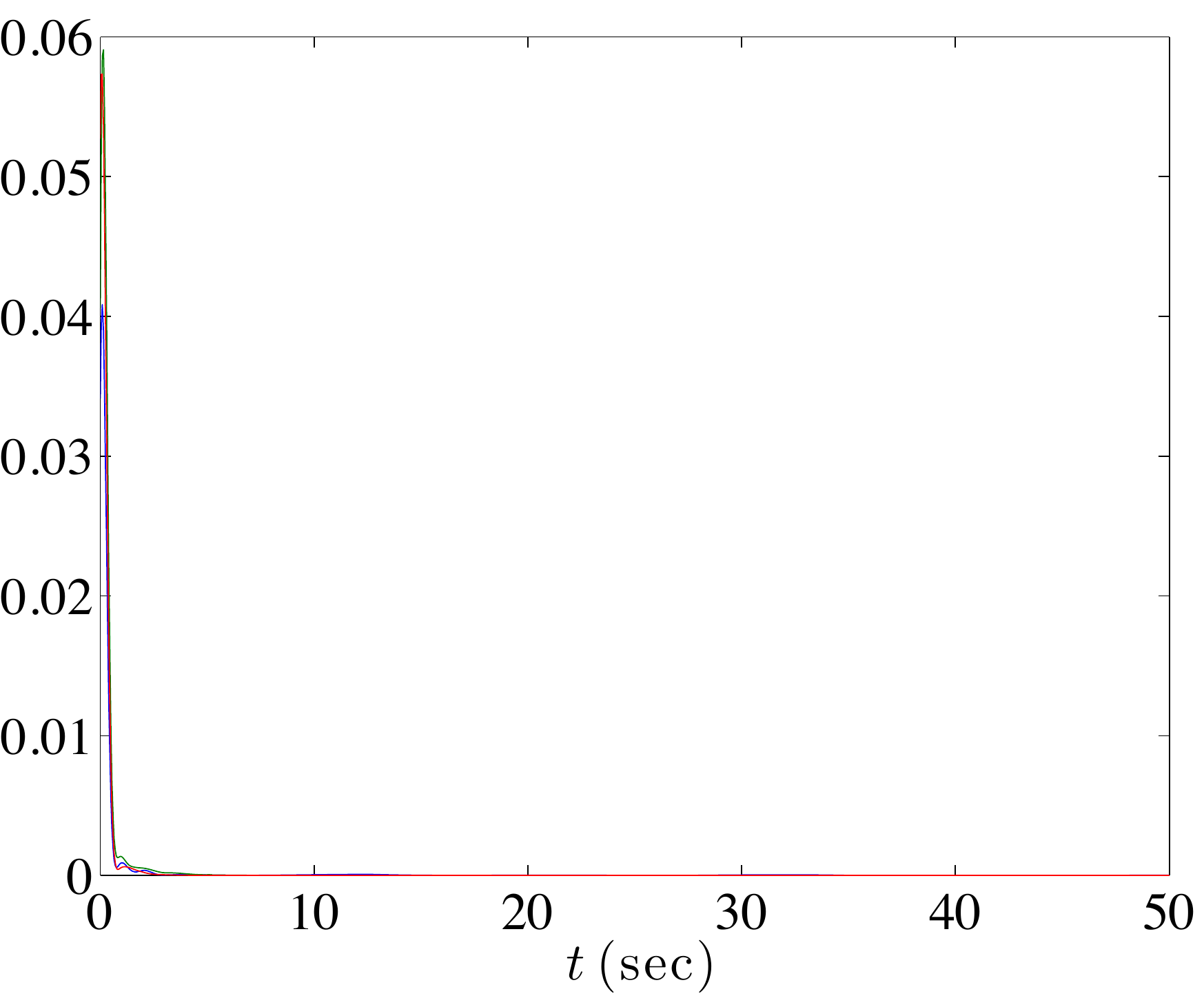}}
	\hfill
	\subfigure[Attitude tracking error of quadrotors $\Psi_i=\frac{1}{2}\|R_i-R_{i_d}\|^2$]{
		\includegraphics[width=0.48\columnwidth]{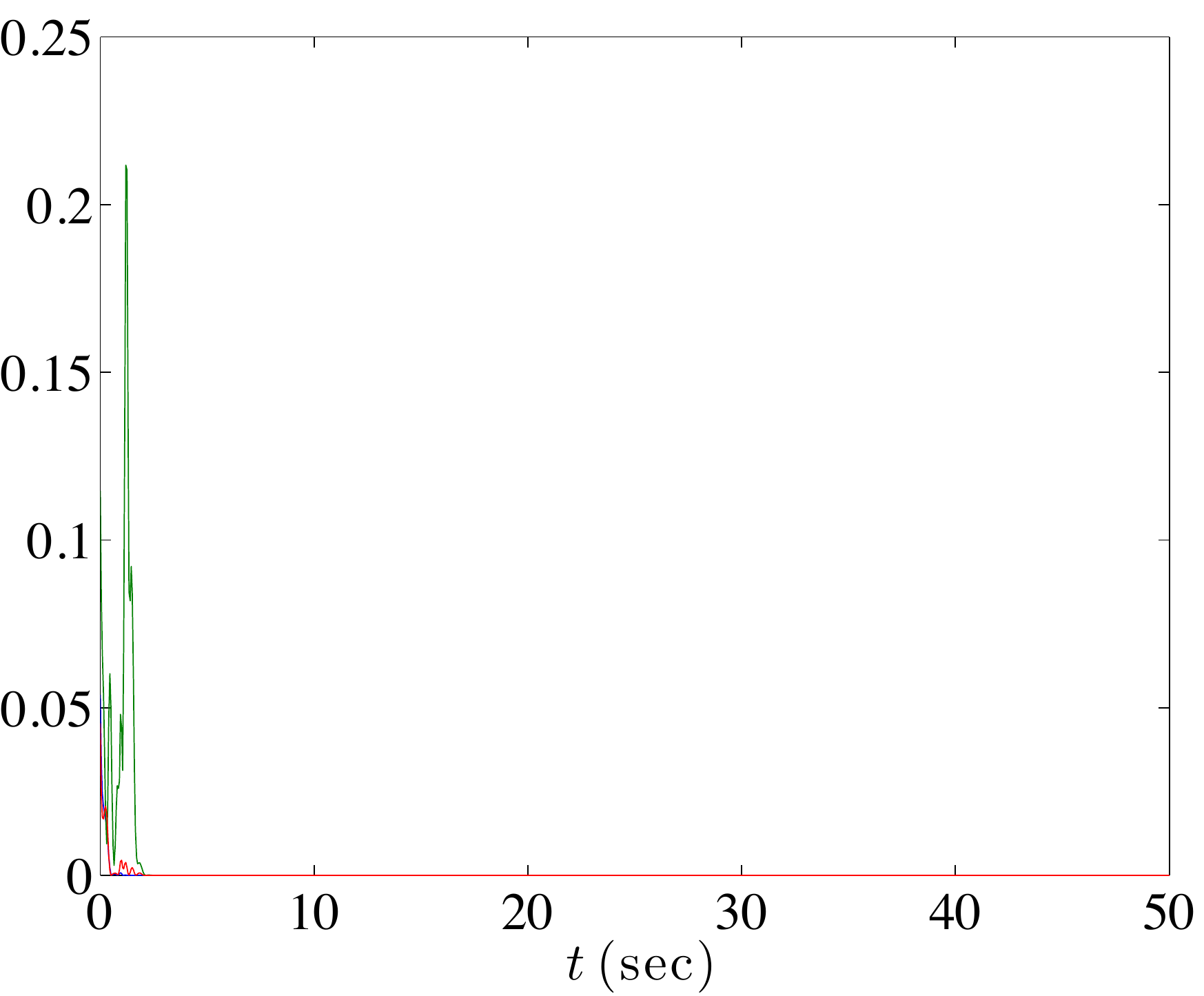}}
}
\centerline{
	\subfigure[Tension at links]{
		\includegraphics[width=0.45\columnwidth]{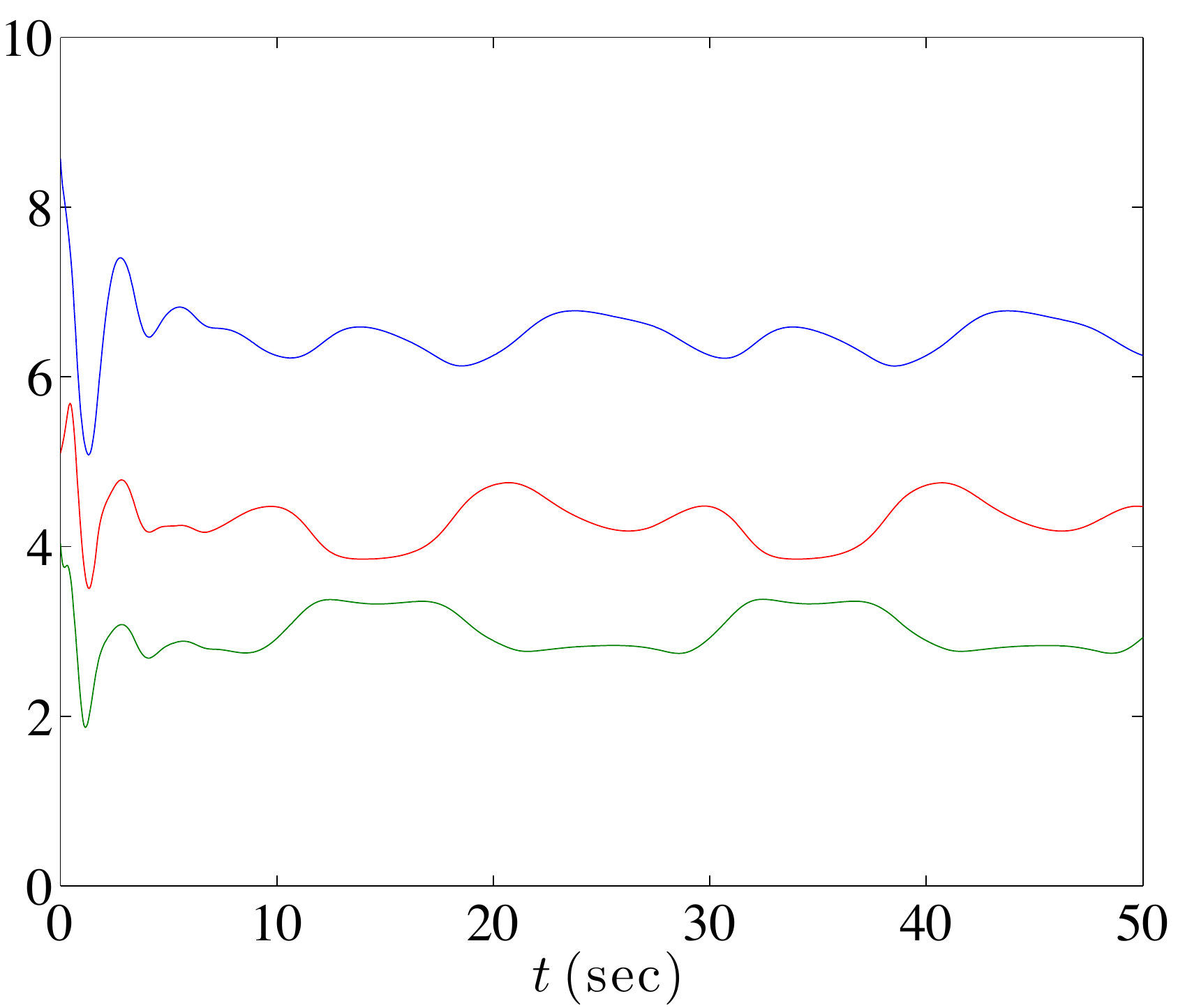}}
	\hfill
	\subfigure[Control input for quadrotors $f_i,M_i$]{
		\includegraphics[width=0.48\columnwidth]{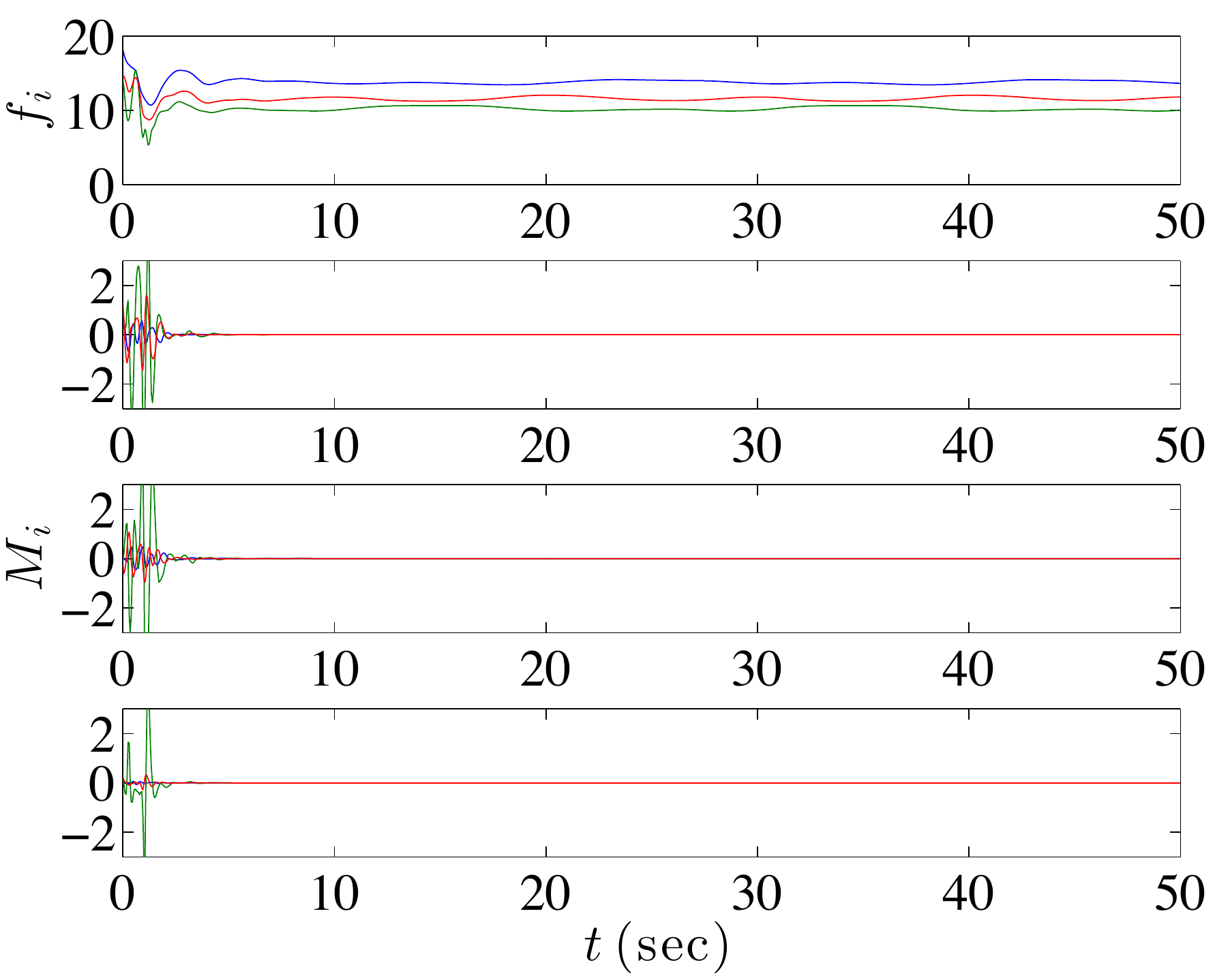}}
}
\caption{Simulation results for tracking errors and control inputs. (for figures (c)-(f): $i=1$:blue, $i=2$:green, $i=3$:red)}\label{fig:SR}
\end{figure}

The corresponding simulation results are presented at Figures \ref{fig:SS} and \ref{fig:SR}. Figure \ref{fig:SS} illustrates the desired trajectory that is shaped like a figure-eight curve around two obstacles represented by cones, and the actual maneuver of the payload and quadrotors. Figure \ref{fig:SR} shows tracking errors for the position and the attitude of the payload, tracking errors for the link directions and the attitude of quadrotors, as well as tension and control inputs. These illustrate excellent tracking performances of the proposed control system.

\appendix
\paragraph{Error Dynamics}

From \refeqn{ddotx0s} and \refeqn{mui}, the dynamics of the position tracking error is given by
\begin{align*}
m_0\ddot e_{x_0} & = m_0(ge_3-\ddot x_{0_d}) + \Delta_{x_0} + \sum_{i=1}^n (q_iq_i^T \mu_{i_d}+ \Delta_{x_i}^\parallel).
\end{align*}
From \refeqn{Fd} and \refeqn{muid0}, this can be rearranged as
\begin{align}
\ddot e_{x_0} & = ge_3-\ddot x_{0_d} + \frac{1}{m_0}F_d  +Y_x+\frac{1}{m_0}(\Delta_{x_0}+\sum_{i=1}^n \Delta_{x_i}^\parallel),\nonumber\\
& = -k_{x_0}e_{x_0} - k_{\dot x_0} \dot e_{x_0} +\frac{1}{m_0}(\Phi_{x_0}\tilde\theta_{x_0}+\sum_{i=1}^n \Phi_{x_i}^\parallel \tilde\theta_{x_i}) +Y_x,
\label{eqn:ddotex0}
\end{align}
where \refeqn{delxiparallel} is used and the estimation error is denoted by $\tilde\theta_{x_i}=\theta_{x_i}-\bar{\theta}_{x_i}$. At the above equation, the last term $Y_x\in\Re^3$ represents the error caused by the difference between $q_i$ and $q_{i_d}$, and it is given by
\begin{align*}
Y_x=\frac{1}{m_0}\sum_{i=1}^n (q_iq_i^T -I) \mu_{i_d}.
\end{align*}
We have $\mu_{i_d}=q_{i_d}q_{i_d}^T\mu_{i_d}$ from \refeqn{qid}. Using this, the error term can be written in terms of $e_{q_i}$ as
\begin{align*}
Y_x  & = \frac{1}{m_0}\sum_{i=1}^n 
 (q_{i_d}^T\mu_{i_d})\{(q_i^Tq_{i_d})q_i-q_{i_d}\}\nonumber\\
& = -\frac{1}{m_0}\sum_{i=1}^n (q_{i_d}^T\mu_{i_d}) \hat q_i e_{q_i}.
\end{align*}
Using \refeqn{muid}, an upper bound of $Y_x$ can be obtained as
\begin{align*}
\|Y_x\| & \leq \frac{1}{m_0} \sum_{i=1}^n \|\mu_{i_d}\|\|e_{q_i}\|\leq  \sum_{i=1}^n\gamma (\|F_d\|+\|M_d\|)\|e_{q_i}\|,
\end{align*}
where $\gamma= \frac{1}{m_0\sqrt{\lambda_{m}[\mathcal{P}\mathcal{P}^T]}}$. From \refeqn{Fd} and \refeqn{Md}, this can be further bounded by
\begin{align}
\|Y_x\|  \leq & \sum_{i=1}^n \{\beta(k_{x_0}\|e_{x_0}\|+k_{\dot x_0}\|\dot e_{x_0}\|)\nonumber\\
&\quad  + \gamma(k_{R_0} \|e_{R_0}\| + k_{\Omega_0}\|e_{\Omega_0}\|) + B\}\|e_{q_i}\|,\label{eqn:YxB}
\end{align}
where $\beta=m_0\gamma$, and the constant $B$ is determined by the given desired trajectories of the payload, the assumption \refeqn{BPhi} on the bounds of $\Phi$ terms, and the adaptive law defined later that guarantee the boundedness of the estimated parameters $\bar\theta$. Throughout the remaining parts of the proof, any bound that can be obtained from $x_{0_d},R_{0_d}$, \refeqn{BPhi}, or the adaptive law is denoted by $B$ for simplicity. In short, the position tracking error dynamics of the payload can be written as \refeqn{ddotex0}, where the error term is bounded by \refeqn{YxB}.

Similarly, we find the attitude tracking error dynamics for the payload as follows. Using \refeqn{dotW0s}, \refeqn{Md}, and \refeqn{mui}, the time-derivative of $J_0 e_{\Omega_0}$ can be written as
\begin{align}
J_0\dot e_{\Omega_0} & = (J_0e_{\Omega_0}+d)^\wedge e_{\Omega_0} - k_{R_0}e_{R_0}-k_{\Omega_0}e_{\Omega_0}\nonumber\\
&\quad + \Phi_{R_0}\tilde\theta_{R_0} + \sum_{i=1}^n \hat\rho_i R_0^T\Phi_{R_0}^\parallel\tilde\theta_{x_i}+Y_R,\label{eqn:doteW0}
\end{align}
where $d=(2J_0-\trs{J_0}I)R_0^T R_{0_d}\Omega_{0_d}\in\Re^3$~\cite{GooLeePECC13} that is bounded, and $\tilde\Delta_{R_0}\in\Re^{3}$ denotes the estimation error given by $\tilde\Delta_{R_0} = \Delta_{R_0}-\hat\Delta_{R_0}$. The error term in the attitude dynamics of the payload, namely $Y_R\in\Re^3$ is given by
\begin{align*}
Y_R & = \sum_{i=1}^n\hat\rho_i R_0^T (q_iq_i^T - I)\mu_{i_d}
= -\sum_{i=1}^n\hat\rho_i R_0^T (q_{i_d}^T \mu_{i_d}) \hat q_i e_{q_i}.
\end{align*}
Similar with \refeqn{YxB}, an upper bound of $Y_R$ can be obtained as
\begin{align}
\|Y_R\|  \leq & \sum_{i=1}^n\{\delta_i (k_{x_0}\|e_{x_0}\|+k_{\dot x_0}\|\dot e_{x_0}\|)\nonumber\\
&\quad  + \sigma_i(k_{R_0} \|e_{R_0}\| + k_{\Omega_0}\|e_{\Omega_0}\|) + B\}\|e_{q_i}\|,\label{eqn:YRB}
\end{align}
where $\delta_i = m_0\frac{\|\hat\rho_i\|}{\sqrt{\lambda_{m}[\mathcal{P}\mathcal{P}^T]}},\sigma_i=\frac{\delta_i}{m_0}\in\Re$.

Next, from \refeqn{dotwif}, the time-derivative of the angular velocity error, projected on to the plane normal to $q_i$ is given as
\begin{align}
-\hat q_i^2 \dot e_{\omega_i} 
& = \dot\omega +( q\cdot\omega_d) \dot q +\hat q^2 \dot \omega_d\nonumber\\
& = -k_q e_{q_i} - k_\omega e_{\omega_i}-\frac{1}{m_il_i}\hat q_i \Phi_{x_i}^\perp\tilde\theta_{x_i}.\label{eqn:dotewi}
\end{align}
In summary, the error dynamics of the simplified dynamic model are given by \refeqn{ddotex0}, \refeqn{doteW0} and \refeqn{dotewi}.

\paragraph{Stability Proof}

Define an attitude configuration error function $\Psi_{R_0}$ for the payload as
\begin{align*}
\Psi_{R_0} = \frac{1}{2}\trs{I-R_{0_d}^T R_0},
\end{align*}
which is positive-definite about $R_0=R_{0_d}$, and $\dot\Psi_{R_0} = e_{R_0}\cdot e_{\Omega_0}$~\cite{LeeLeoPICDC10,GooLeePECC13}. We also introduce a configuration error function $\Psi_{q_i}$ for each link that is positive-definite about $q_i=q_{i_d}$ as
\begin{align*}
\Psi_{q_i} = 1-q_{i}\cdot q_{i_d}.
\end{align*}
For positive constants $e_{x_{\max}}, \psi_{R_0}, \psi_{q_i}$, consider the following open domain containing the zero equilibrium of tracking error variables:
\begin{align}
D=\{&(e_{x_0},\dot e_{x_0}, e_{R_0}, e_{\Omega_0}, e_{q_i}, e_{\omega_i},\tilde\Delta_{x_0},\tilde\Delta_{R_0},\tilde\Delta_{x_i})\nonumber\\
& \in(\Re^3)^4\times (\Re^3\times\Re^3)^n\times (\Re^3)^2\times \Re^{3n}\,|\,\nonumber\\
& \|e_{x_0}\|< e_{x_{\max}},\,\Psi_{R_0}< \psi_{R_0}<1,\,\Psi_{q_i}< \psi_{q_i}<1\}.\label{eqn:D}
\end{align}
In this domain, we have $\|e_{R_0}\|=\sqrt{\Psi_{R_0}(2-\Psi_{R_0})} \leq \sqrt{\psi_{R_0}(2-\psi_{R_0})} \triangleq \alpha_0 < 1$, and $\|e_{q_i}\|=\sqrt{\Psi_{q_i}(2-\Psi_{q_i})} \leq \sqrt{\psi_{q_i}(2-\psi_{q_i})} \triangleq \alpha_i < 1$. It is assumed that $\psi_{q_i}$ is sufficiently small such that $n\alpha_i\beta < 1$. 

We can show that the configuration error functions are quadratic with respect to the error vectors in the sense that
\begin{gather*}
\frac{1}{2}\|e_{R_0}\|^2 \leq \Psi_{R_0} \leq \frac{1}{2-\psi_{R_0}} \|e_{R_0}\|^2,\\
\frac{1}{2}\|e_{q_i}\|^2 \leq \Psi_{q_i} \leq \frac{1}{2-\psi_{q_i}} \|e_{q_i}\|^2,
\end{gather*}
where the upper bounds are satisfied only in the domain $D$. 

Define
\begin{align*}
\mathcal{V}_0 & = \frac{1}{2}\|\dot e_{x_0}\|^2 + \frac{1}{2}k_{x_0}\|e_{x_0}\|^2   +c_x e_{x_0}\cdot \dot e_{x_0}\\
&\quad + \frac{1}{2}e_{\Omega_0}\cdot J_0\Omega_0 + k_{R_0}\Psi_{R_0} + c_R e_{R_0}\cdot J_0e_{\Omega_0}\\
&\quad+ \sum_{i=1}^n \frac{1}{2}\|e_{\omega_i}\|^2 + k_q\Psi_{q_i} + c_q e_{q_i}\cdot e_{\omega_i},
\end{align*}
where $c_x,c_R,c_q$ are positive constants. This is composed of tracking error variables only, and we define another function for the estimation errors of the adaptive laws as
\begin{align*}
\mathcal{V}_a & = \frac{1}{2h_{x_0}}\|\tilde \theta_{x_0}\|^2
+\frac{1}{2h_{R_0}}\|\tilde \theta_{R_0}\|^2
+ \sum_{i=1}^n \frac{1}{2h_{x_i}} \|\tilde\theta_{x_i}\|^2.
\end{align*}
The Lyapunov function for the complete simplified dynamic model is chosen as $\mathcal{V}=\mathcal{V}_0+\mathcal{V}_a$.

Let $z_{x_0} = [\|e_{x_0}\|,\|\dot e_{x_0}\|]^T$, $z_{R_0}=[\|e_{R_0}\|,\|e_{\Omega_0}\|]^T$, $z_{q_i}= [\|e_{q_i}\|,\|e_{\omega_i}\|]^T\in\Re^2$. The first part of the Lyapunov function $\mathcal{V}_0$ satisfies
\begin{align*}
z_{x_0}^T &\underline{P}_{x_0}z_{x_0} + 
z_{R_0}^T \underline{P}_{R_0}z_{R_0} + 
\sum_{i=1}^n z_{q_i}^T \underline{P}_{q_i}z_{q_i}\leq\mathcal{V}_0\\
&\leq z_{x_0}^T \overline{P}_{x_0}z_{x_0} + 
z_{R_0}^T \overline{P}_{R_0}z_{R_0} + 
\sum_{i=1}^n z_{q_i}^T \overline{P}_{q_i}z_{q_i},
\end{align*}
where the matrices $\underline{P}_{x_0},\underline{P}_{R_0},\underline{P}_{q_i},\underline{P}_{x_0},\underline{P}_{R_0},\underline{P}_{q_i}\in\Re^{2\times 2}$ are given by
\begin{alignat*}{2}
\underline{P}_{x_0}&=\frac{1}{2}\begin{bmatrix} k_{x_0} & -c_x\\-c_x & 1\end{bmatrix},&\;
\overline{P}_{x_0}&=\frac{1}{2}\begin{bmatrix} k_{x_0} & c_x\\c_x & 1\end{bmatrix},\\
\underline{P}_{R_0}&=\frac{1}{2}\begin{bmatrix} 2k_{R_0} & -c_R\overline\lambda\\-c_R\overline\lambda & \underline\lambda\end{bmatrix},&
\overline{P}_{R_0}&=\frac{1}{2}\begin{bmatrix} \frac{2k_{R_0}}{2-\psi_{R_0}} & c_R\overline\lambda\\c_R\overline\lambda & \overline\lambda\end{bmatrix},\\
\underline{P}_{q_i}&=\frac{1}{2}\begin{bmatrix} 2k_{q} & -c_q\\-c_q & 1\end{bmatrix},&
\overline{P}_{q_i}&=\frac{1}{2}\begin{bmatrix} \frac{2k_{q}}{2-\psi_{q_i}} & c_q\\c_q & 1\end{bmatrix},
\end{alignat*}
where $\underline\lambda=\lambda_{m}[J_0]$ and $\overline\lambda=\lambda_{M}[J_0]$. If the constants $c_x,c_{R_0},c_q$ are sufficiently small, all of the above matrices are positive-definite. As the second part of the Lyapunov function $\mathcal{V}_a$ is already given as a quadratic form, it is straightforward to see that the complete Lyapunov function $\mathcal{V}$ is positive-definite and decrescent. 

The time-derivative of the Lyapunov function along the error dynamics \refeqn{ddotex0}, \refeqn{doteW0}, and \refeqn{dotewi} is given by
\begin{align}
\dot{\mathcal{V}} & = -(k_{\dot x_0}-c_x) \|\dot e_{x_0}\|^2 
-c_xk_{x_0} \|e_{x_0}\|^2 -c_xk_{\dot x_0} e_{x_0}\cdot \dot e_{x_0}\nonumber\\
&\quad+ (c_x e_{x_0}+\dot e_{x_0})\cdot Y_x -k_{\Omega_0}\|e_{\Omega_0}\|^2 + c_R \dot e_R\cdot J_0e_{\Omega_0}\nonumber\\
&\quad -c_R k_{R_0}\|e_{R_0}\|^2 + c_R e_{R_0}\cdot ((J_0e_{\Omega_0}+d)^\wedge e_{\Omega_0}-k_{\Omega_0} e_{\Omega_0})\nonumber\\
&\quad + (e_{\Omega_0}+c_R e_{R_0}) \cdot Y_R\nonumber\\
&\quad +\sum_{i=1}^n -(k_\omega-c_q) \|e_{\omega_i}\|^2 -c_qk_q \|e_{q_i}\|^2 -c_q k_\omega e_{q_i}\cdot e_{\omega_i}\nonumber\\
&\quad + \frac{1}{m_0}(\dot e_{x_0}+c_xe_{x_0})\cdot (\Phi_{x_0}\tilde\theta_{x_0} +  \sum_{i=1}^n \Phi_{x_i}^\parallel\tilde\theta_{x_i}) \nonumber\\
&\quad +(e_{\Omega_0}+ c_Re_{R_0})\cdot (\Phi_{R_0}\tilde\theta_{R_0}+\sum_{i=1}^n \hat\rho_i R_0^T \Phi_{x_i}^\parallel\tilde\theta_{x_i})\nonumber\\
&\quad -\parenth{\sum_{i=1}^n ( e_{\omega_i} + c_qe_{q_i})\cdot \frac{\hat q_i}{m_il_i}\Phi_{x_i}^\perp \tilde\theta_{x_i}}-\frac{1}{h_{x_0}}\tilde\theta_{x_0}\cdot \dot{\bar\theta}_{x_0}\nonumber\\
&\quad 
-\frac{1}{h_{R_0}}\tilde\theta_{R_0}\cdot\dot{\bar\theta}_{R_0}
-\sum_{i=1}^n\frac{1}{h_{x_i}}\tilde\theta_{x_i}\cdot \dot{\bar\theta}_{x_i}.
\label{eqn:dotV0}
\end{align}

In the above equation, the expressions at the last four lines depending on the estimate error can be rearranged by using the adaptive laws, \refeqn{hatDx0_dot}-\refeqn{y_xi} as
\begin{align*}
&\tilde\theta_{x_0}\cdot (y_{x_0}-\mathrm{Pr}(\bar\theta_{x_0},y_{x_0}))
+\tilde\theta_{R_0}\cdot (y_{R_0}-\mathrm{Pr}(\bar\theta_{R_0},y_{R_0}))\\
&\quad+\sum_{i=1}^n\tilde\theta_{x_i}\cdot (y_{x_i}-\mathrm{Pr}(\bar\theta_{x_i},y_{x_i})).
\end{align*}
From the definition of the projection map, the above expressions vanish for the first case of \refeqn{Pr}. For the second case, 
\begin{align*}
(\theta-\bar\theta)\cdot (y-\mathrm{Pr}(\bar\theta,y)) & = 
\frac{1}{\norm{\bar\theta}^2}(\theta-\bar\theta)\cdot \bar\theta\bar\theta^T y \\
& = \frac{1}{\norm{\bar\theta}^2}(\bar\theta^T\theta-\bar\theta^T\bar\theta) (\bar\theta^T y)\leq 0.
\end{align*}
for each estimated parameter. The last inequality is due to $(\bar\theta^T\theta-\bar\theta^T\bar\theta)\leq 0$ and $(\bar\theta^T y)>0$ obtained by \refeqn{Btheta} and \refeqn{Pr}.

An upper bound of the remaining expressions of $\dot{\mathcal{V}}$ at \refeqn{dotV0} can be obtained as follows.
Since $\|e_{R_0}\|\leq 1$, $\| \dot e_{R_0}\|\leq \|e_{\Omega_0}\|$ and $\|d\|\leq B$, 
\begin{align}
\dot{\mathcal{V}} & \leq -(k_{\dot x_0}-c_x) \|\dot e_{x_0}\|^2 
-c_xk_{x_0} \|e_{x_0}\|^2 -c_xk_{\dot x_0} e_{x_0}\cdot \dot e_{x_0}\nonumber\\
&\quad+ (c_x e_{x_0}+\dot e_{x_0})\cdot Y_x -(k_{\Omega_0}-2c_R\overline\lambda)\|e_{\Omega_0}\|^2 \nonumber\\
&\quad -c_R k_{R_0}\|e_{R_0}\|^2 + c_R(k_{\Omega_0}+B) \|e_{R_0}\|\|e_{\Omega_0}\|\nonumber\\
&\quad + (e_{\Omega_0}+c_R e_{R_0}) \cdot Y_R\nonumber\\
&\quad +\sum_{i=1}^n -(k_\omega-c_q) \|e_{\omega_i}\|^2 -c_qk_q \|e_{q_i}\|^2 -c_q k_\omega e_{q_i}\cdot e_{\omega_i}.
\label{eqn:dotV2}
\end{align}
From \refeqn{YxB}, an upper bound of the fourth term of the right-hand side is given by
\begin{align}
\|&(c_x e_{x_0}+\dot e_{x_0})\cdot Y_x\|  \leq  \nonumber\\
&\sum_{i=1}^n\alpha_i\beta( c_xk_{x_0}\|e_{x_0}\|^2 + c_x k_{\dot x_0}\|e_{x_0}\|\|\dot e_{x_0}\| +k_{\dot x_0} \|\dot e_{x_0}\|^2)
\nonumber\\
&+\{c_xB \|e_x\|+(\beta k_{x_0}e_{x_{\max}}+B) \|\dot e_{x_0}\| \} \|e_{q_i}\|\nonumber\\
&+\alpha_i\gamma(c_x\|e_{x_0}\|+\|\dot e_{x_0}\|)(k_{R_0}\|e_{R_0}\|+k_{\Omega_0}\|e_{\Omega_0}\|).
\label{eqn:YxB2}
\end{align}
Similarly, using \refeqn{YRB},
\begin{align}
\|&(c_R e_{R_0}+e_{\Omega_0})\cdot Y_R\|\leq \nonumber\\
&\sum_{i=1}^n \alpha_i\sigma_i( c_R k_{R_0}\|e_{R_0}\|^2 + c_R k_{\Omega_0}\|e_{R_0}\|\|e_{\Omega_0}\| + k_{\Omega_0}\|e_{\Omega_0}\|^2)\nonumber\\
&+\{c_R B\|e_{R_0}\| + (\alpha_0\sigma_ik_{R_0}+B)\|e_{\Omega_0}\|\}\|e_{q_i}\|\nonumber\\
&+\alpha_i\delta_i(c_R \|e_{R_0}\|+\|e_{\Omega_0}\|)(k_{x_0}\|e_{x_0}\|+k_{\dot x_0}\|\dot e_{x_0}\|).
\end{align}
Substituting these into \refeqn{dotV2} and rearranging, 
\begin{align}
\dot{\mathcal{V}} \leq \sum_{i=1}^n - z_i^T W_i z_i,\label{eqn:U0}
\end{align}
where $z_i=[\|z_{x_0}\|,\, \|z_{R_0}\|,\,\ \|z_{q_i}\|]^T\in\Re^3$, and the matrix $W_i\in\Re^{3\times 3}$ is defined as
\begin{align}
W_i = 
\begin{bmatrix} \lambda_m[W_{x_i}] & -\frac{1}{2}\|W_{xR_i}\| & -\frac{1}{2}\|W_{xq_i}\|\\
-\frac{1}{2}\|W_{xR_i}\| & \lambda_m[W_{R_i}] & -\frac{1}{2}\|W_{Rq_i}\| \\
-\frac{1}{2}\|W_{xq_i}\| & -\frac{1}{2}\|W_{Rq_i}\| & \lambda_m[W_{q_i}]
\end{bmatrix},\label{eqn:Wi}
\end{align}
where the sub-matrices are given by
\begin{gather*}
W_{x_i} = \frac{1}{n}\begin{bmatrix}
c_x k_{x_0} (1-n\alpha_i\beta) & -\frac{c_x k_{\dot x_0}}{2}(1+n\alpha_i\beta)\\
-\frac{c_x k_{\dot x_0}}{2}(1+n\alpha_i\beta) & k_{\dot x_0}(1-n\alpha_i\beta)-c_x 
\end{bmatrix},\\
W_{R_i} = \frac{1}{n}\begin{bmatrix}
c_R k_{R_0} (1-n\alpha_i\sigma_i) & -\frac{c_R }{2}(k_{\Omega_0}+B+n\alpha_i\sigma_i)\\
-\frac{c_R }{2}(k_{\Omega_0}+B+n\alpha_i\sigma_i) & k_{\Omega_0}(1-n\alpha_i\sigma_i)-2c_R\overline\lambda
\end{bmatrix},\\
W_{q_i}=\begin{bmatrix}
c_q k_q & -\frac{c_qk_\omega}{2}\\
-\frac{c_qk_\omega}{2} & k_\omega-c_q\\
\end{bmatrix},\\
W_{xR_i}= \alpha_i\begin{bmatrix}
\gamma c_x k_{R_0}+\delta_i c_Rk_{x_0} 
& \gamma c_x k_{\Omega_0}+\delta_i k_{x_0}\\
\gamma k_{R_0}+\delta_ic_Rk_{\dot x_0}  &  
\gamma k_{\Omega_0} + \delta_i k_{\dot x_0}
\end{bmatrix},\\
W_{xq_i}= \begin{bmatrix}
c_x B & 0\\
\beta k_{x_0}e_{x_{\max}}+B & 0 
\end{bmatrix},\quad
W_{xR_i}= \begin{bmatrix}
c_R B & 0\\
\alpha_0\sigma_i k_{R_0} +B & 0 
\end{bmatrix}.
\end{gather*}

If the constants $c_x,c_R,c_q$ that are independent of the control input are sufficiently small, the matrices $W_{x_i},W_{R_i},W_{q_i}$ are positive-definite. Also, if the error in the direction of the link is sufficiently small relative to the desired trajectory, we can choose the controller gains such that the matrix $W_i$ is positive-definite, which follows that the zero equilibrium of tracking errors is stable in the sense of Lyapunov, and all of the tracking error variables $z_i$ and the estimation error variables are uniformly bounded, i.e., $e_{x_0},\dot e_{x_0},e_{R_0},e_{\Omega_0},e_{q_i},e_{\omega_i},\tilde\Delta_{x_0},\tilde\Delta_{R_0},\tilde\Delta_{x_i}\in\mathcal{L}_\infty$. These also imply that $e_{x_0},\dot e_{x_0},e_{R_0},e_{\Omega_0},e_{q_i},e_{\omega_i}\in\mathcal{L}_2$ from \refeqn{U0}, and that $\dot e_{x_0},\ddot e_{x_0},\dot e_{R_0},\dot e_{\Omega_0},\dot e_{q_i},\dot e_{\omega_i}\in\mathcal{L}_\infty$. According to Barbalat's lemma~\cite{IoaSun95}, all of the tracking error variables $e_{x_0},\dot e_{x_0},e_{R_0},e_{\Omega_0},e_{q_i},e_{\omega_i}$ and their time-derivatives asymptotically converge to zero. 
\subsection{Proof of Proposition \ref{prop:FDM}}\label{sec:prfFDM}

We first show that the attitude of the $i$-th quadrotor becomes exactly equal to its desired value within a finite time, i.e., $R_i(t)=R_{i_c}(t)$ for any $t\geq T$ for some $T>0$. This is achieved by finite-time stability theory~\cite{BhaBerSJCO00}. This proof is composed of two parts: (i) $s_i(t)=0$ for any $t>T_s$ for some $T_s<\infty$; (ii) when the state is confined to the surface defined by $s_i=0$, we have $e_{R_i}(t)=e_{\Omega_i}(t)=0$ for any $t>T_R$ for some $T_R<\infty$. From now on, we drop the subscript $i$ for simplicity, as the subsequent development is identical for all quadrotors.

From~\cite{LeeLeoPICDC10}, the error dynamics for $e_\Omega$ is given by
\begin{align*}
J\dot e_{\Omega} & = -\Omega\times \Omega +M +\Delta_R+ J(\hat\Omega R^T R_c \Omega_c - R^T R_c\dot\Omega_c).
\end{align*}
Substituting \refeqn{Mi},
\begin{align}
J\dot e_{\Omega} & = -k_s s - l_s S(r,s)+\Delta_R-B_\delta\frac{s}{\|s\|}\nonumber\\
&\quad - (k_R J+ l_s r J\mathrm{diag}_j[|e_{R_{j}}|^{r-1}]) E(R,R_{c})e_\Omega  .\label{eqn:JeWi_dot}
\end{align}

Let a Lyapunov function be
\begin{align*}
\mathcal{W} & = \frac{1}{2} s\cdot Js.
\end{align*}
From \refeqn{si} and \refeqn{eRi_dot}, its time-derivative is given by
\begin{align*}
\dot{\mathcal{W}} & = s\cdot\{J\dot e_\Omega + (k_R J + l_s r J\mathrm{diag}_j[|e_{R_j}|^{r-1}]) E(R,R_c) e_\Omega\}.
\end{align*}
Substituting \refeqn{JeWi_dot} and \refeqn{Mi}, and using \refeqn{Bdelta}, it reduces to
\begin{align*}
\dot{\mathcal{W}} & = s\cdot\{-k_s s -l_s S(r,s)+\Delta_R - \frac{s}{\|s\|}B_\delta\}\\
& \leq -k_s \|s\|^2 - l_s \sum_{j=1}^n |s_j|^{r+1}+B_\delta \|s\| -B_\delta \|s\| \\
& \leq -k_s\|s\|^2 - l_s \|s\|^{r+1},
\end{align*}
where the last inequality is obtained from the fact that $\|x\|^\alpha\leq \sum_{i=1}^n|x_i|^\alpha$ for any $x=[x_1,\ldots,x_n]^T$ and $0<\alpha<2$~\cite[Lemma 2]{WuRadAA11}. Therefore, 
\begin{align*}
\dot{\mathcal{W}} & \leq -\epsilon_1 \mathcal{W}  -\epsilon_2\mathcal{W}^{(r+1)/2},
\end{align*}
where $\epsilon_1=\frac{2k_s}{\lambda_{M}[J]}$ and $\epsilon_2=l_s(\frac{2}{\lambda_{M}[J]})^{(r+1)/2}$. This implies that $s(t)=0$ for any $t \geq T_s$, where the settling time $T_s$ satisfies
\begin{align*}
T_s\leq \frac{2}{\epsilon_1(1-r)}\ln \frac{\epsilon_1 \mathcal{W}(0)^{(1-r)/2}+\epsilon_2}{\epsilon_2},
\end{align*}
according to~\cite[Remark 2]{YuYuA05}.

Next, consider the second part of the proof when $s=0$. Let a configuration error function for the attitude of a quadrotor be
\begin{align*}
\Psi_R =\frac{1}{2}\trs{I-R_c^T R}.
\end{align*}
Consider a domain give by $D_R=\{(R,\Omega)\in\SO\times\Re^3\,|\, \Psi_R < \psi_R < 2\}$. It has been shown that the following inequality is satisfied in the domain,
\begin{align}
\frac{1}{2}\|e_R\|^2 \leq \Psi_R \leq \frac{1}{2-\psi_R} \|e_R\|^2.\label{eqn:PsiB}
\end{align}
Therefore, it is positive-definite about $e_R=0$. The time-derivative of $\Psi_R$ is given by $\dot\Psi_R= e_R\cdot e_\Omega$. Therefore, when $s=0$, we have
\begin{align*}
\dot\Psi_R & = 
-k_{R} \|e_{R}\|^2 - l_{R} \sum_{j=1}^n |e_{R_j}|^{r+1}\\
& \leq 
-k_{R} \|e_{R}\|^2 - l_{R} \|e_R\|^{r+1},
\end{align*}
Substituting \refeqn{PsiB}, we obtain 
\begin{align*}
\dot\Psi_R & \leq -\epsilon_3\Psi_R - \epsilon_4\Psi_R^{(r+1)/2},
\end{align*}
where $\epsilon_3=\frac{k_R}{2-\psi_R}$ and $\epsilon_4=\frac{l_R }{(2-\psi_R)^{(r+1)/2}}$.
This implies that $e_R(t)=e_\Omega(t)=0$ for any $t \geq T_R$, where the settling time $T_R$ satisfies
\begin{align*}
T_R\leq \frac{2}{\epsilon_3(1-r)}\ln \frac{\epsilon_3 \Psi_R(0)^{(1-r)/2}+\epsilon_4}{\epsilon_4}.
\end{align*}

In summary, whenever $t\geq T^*\triangleq \max  \{T_s,T_R\}$, it is guaranteed that $R_i(t)=R_{i_c}(t)$ for the $i$-th quadrotor. Next, we consider the \textit{reduced system}, which corresponds to the dynamics of the payload and the rotational dynamics of the links when $R_i(t)\equiv R_{i_c}(t)$. From \refeqn{fi} and \refeqn{b3i}, the control force of quadrotors when $R_i=R_{i_c}$ is given by
\begin{align*}
-f_i \cdot R_i e_3 = (u_i\cdot R_{c_i} e_3) R_{c_i} e_3 = 
(u_i\cdot -\frac{u_i}{\|u_i\|}) -\frac{u_i}{\|u_i\|} = u_i.
\end{align*}
Therefore, the reduced system is given by the controlled dynamics of the simplified model. 

If the controller gains $k_R,l_R,k_s,l_s$ are selected large such that $T^*$ is sufficiently small, the solution stays inside of the domain $D$, where the stability results of Proposition 1 hold, during $0\leq t<T^*$. After $t\geq T^*$, the controlled system corresponds to the controlled system of the simplified dynamic model, and from Proposition \ref{prop:SDM}, the tracking errors asymptotically coverage to zero, and the estimation error are uniformly bounded.

\bibliography{/Users/tylee/Documents/BibMaster}
\bibliographystyle{IEEEtran}

\end{document}